\newcommand\TheTitle{A low-rank technique for computing the quasi-stationary distribution of subcritical Galton-Watson processes}
\newcommand\TheShortTitle{A low-rank technique for computing the QSD of Galton-Watson processes}
\newcommand\TheAuthors{Sophie Hautphenne and Stefano Massei}
\newcommand{\vc}[1]{\boldsymbol{#1}}
\headers{\TheShortTitle}{\TheAuthors}
\title{\TheTitle}
\author{	Sophie Hautphenne\thanks{University of Melbourne, Australia and EPF Lausanne, Switzerland
		\email{sophiemh@unimelb.edu.au}} \and
	Stefano Massei\thanks{EPF Lausanne, Switzerland,
		\email{stefano.massei@epfl.ch}}}
\pgfplotsset{compat=1.9}
\pgfplotsset{select coords between index/.style 2 args={
		x filter/.code={
			\ifnum\coordindex<#1\fi
			\ifnum\coordindex>#2\fi
		}
}}
\DeclarePairedDelimiter{\norm}{\lVert}{\rVert}
\DeclareMathOperator{\diag}{diag}
\numberwithin{theorem}{section}
\renewcommand{\leq}{\leqslant}
\renewcommand{\geq}{\geqslant}
\begin{document}
	\maketitle
	\begin{abstract}
		We present  a new algorithm for computing the quasi-stationary distribution of subcritical Galton--Watson branching processes.
		This algorithm is based on a particular discretization of a well-known functional equation that characterizes the quasi-stationary distribution of these processes. We provide a theoretical analysis of the approximate low-rank structure that stems from this discretization, and we extend the procedure to multitype branching processes.
		We use numerical examples to demonstrate that our algorithm is both more accurate and more efficient than other
		approaches.
		\bigskip
		
		{\bf Keywords:} Galton-Watson processes, quasi-stationary distribution, Yaglom limit, low-rank matrices, low-rank approximation.
		
		\bigskip 
		
		{\bf AMS subject classifications:} 
		15B05, 
		65C40. 
	\end{abstract}
	\section{Introduction}
	Many biological populations are doomed to extinction due to low reproduction rates, the presence of predators, competition for limited resources, lack of suitable habitat, or other factors. However, before extinction eventually occurs the population size may fluctuate around some positive values for a long period of time. We are then interested in the long-term distribution of the size of the population; roughly speaking, this amounts to studying the \emph{quasi-stationary distribution}.  We illustrate this in Figure \ref{QSDex1} for a stochastic process with logistic growth. 
	
	\begin{figure}
		\centering
		\includegraphics[width=10cm]{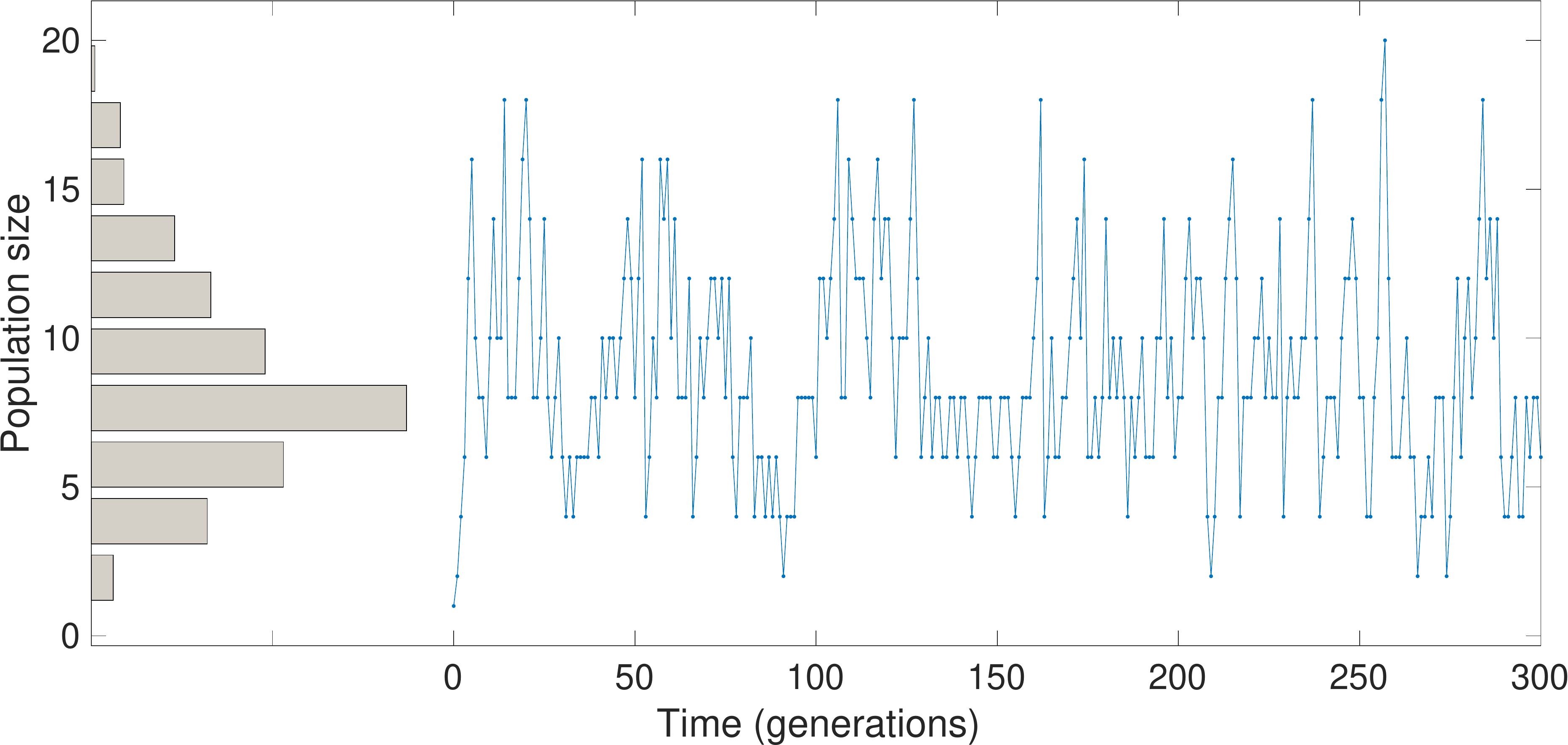}
		\caption{\label{QSDex1}A trajectory of a discrete-time population-size-dependent branching process, starting with a single individual. An empirical estimate of the quasi-stationary distribution is superimposed.}
	\end{figure}
	In this paper, we focus on the class of stochastic processes called the \emph{Galton–Watson} (GW) \emph{branching processes}. These processes are particular discrete-time Markov chains used to model randomly evolving populations in which individuals reproduce independently of each other. They have been
	successfully illuminating real-world problems that arise in diverse areas, such as biology, chemistry, particle
	physics, and computer science. Classical reference books on branching processes include Harris \cite{harris}, Athreya and Ney \cite{athreya2004branching}, and Haccou, Jagers and Vatutin \cite{haccou2005branching}.
	
	Quasi-stationary distributions of stochastic processes have been a focus of attention for many years. Their study started with the work of Yaglom in the late 1940's, who was the first to establish the existence of a particular quasi-stationary distribution, called the \emph{Yaglom limit}, in the (subcritical) GW {branching process} \cite{yaglom1947certain}. 
	The computation of quasi-stationary distributions of general Markov chains can generally be tackled from different angles. The most common approaches involve using simulation techniques, or solving for the left eigenvector of the transition matrix restricted to the positive states; for more details we refer to the excellent surveys of M\'el\'eard and Villemonais \cite{meleard2012quasi} and van Doorn and Pollet \cite{van2013quasi}, and references therein. These methods have clear limitations, especially when the state space of the process is unbounded. 
	Our motivation for considering subcritical GW processes here stems from the fact that their Yaglom limit has a specific characterisation: if $P(z):=\sum_{j\geq 0} p_j z^j$ denotes the (known) probability generating function of the offspring distribution, $m:=P'(1)<1$ its mean, and $G(z):=\sum_{j\geq 1} g_j z^j$ the unknown probability generating function of the Yaglom limit $(g_j)_{j\geq 1}$, then $G(z)$ solves the modified Schr\"oder functional equation
	\begin{equation}\label{eq:G}G(0)=0,\quad G(P(z))=m G(z)+1-m,\qquad z\in[0,1].\end{equation} There exists a unique probability generating function $G(z)$ that solves \eqref{eq:G} \cite[Theorem 1]{heathcote}.
	To the best of our knowledge, no attention has been paid to the numerical solution of this equation. 
	
	In this paper, we propose an efficient algorithmic method to compute the coefficients $g_j$ of $G(z)$ when the latter is analytic on a neighborhood of the unit disc. Our approach consists in using Cauchy's integral formula to rewrite \eqref{eq:G}  as
	\[
	\int_{\Gamma}\frac{G(t)}{t-P(z)}dt=m G(z)+1-m,
	\]
	where $\Gamma$ is a circle of appropriate radius $r$. Discretizing the integral on the left-hand side by means of the trapezoidal rule leads to
	\[
	\sum_{j=1}^{n} G(r\xi_j)\cdot \frac{r\xi_j}{n}\cdot (r\xi_j- P(z))^{-1}= mG(z)+1-m,
	\]where $\{r\xi_j\}_{1\leq j\leq n}$ are the scaled $n$th roots of unity.
	Evaluating the last equation in $z=r\xi_j$ for $1\leq j\leq n$ leads to a linear system where the unknowns are the (approximate) quantities $G(r\xi_j)$. We then retrieve an interpolating polynomial of degree $n$ for $G(z)$ by applying the Fast Fourier Transform (FFT). 
	This particular discretization method provides highly structured data and allows us to deal with a large number $n$ of integration nodes. In the second part of the paper we perform a theoretical analysis of the low-rank structure arising from the discretization scheme, and we discuss how to modify the algorithm in order to benefit from this property. The key idea is to approximate the coefficient matrix with a sum $mI-UV^*$ where $U,V$ are tall and skinny matrices retrieved by means of the \emph{Adaptive Cross Approximation} algorithm \cite[Algorithm 1]{borm}. This yields a procedure with almost linear complexity of computational time and storage.
	In the final part of the paper we extend the technique to multitype branching processes. Here, the computational cost and the memory consumption suffer from the curse of dimensionality. The presence of the low-rank structure enables to partially mitigate this effect and to obtain satisfactory results in the two-dimensional case.
	
	The paper is organized as follows; in Section~\ref{sec:back}
	we recall some background notions. We dedicate Section~\ref{sec:properties} to the study of the regularity of $G(z)$ and the consequent decay of the coefficients $g_j$ as $j\to\infty$. In particular, we provide results on the interplay between the regularity of $P(z)$ and $G(z)$. In Section~\ref{sec:alg} we describe the numerical procedures for the computation of the coefficients $g_j$. In Section~\ref{sec:ev-int} we introduce a new method based on solving a discretized version of Equation \eqref{eq:G}, and we compare it with other techniques in Section~\ref{sec:tests}. In Section~\ref{sec:rank} we perform an analysis of the rank structure stemming from the discretization process, and we provide a large scale version of the new algorithm in Section~\ref{sec:exploit}. In Section~\ref{sec:multi} we extend the procedure to multitype GW branching processes. Finally, we gather the proofs of the technical results of Section~\ref{sec:properties} in an appendix.
	
	Throughout the paper, for $z_0\in\mathbb C$ and $r>0$, we let $\mathcal B(z_0, r) :=\{z\in\mathbb C:\ |z-z_0|< r\} $, $\mathcal D(z_0, r) :=\{z\in\mathbb C:\ |z-z_0|\leq r\} $ and $\mathcal S^1 :=\{z\in\mathbb C:\ |z|= 1\}$; we let $\partial$ indicate the border of a set with respect to the Euclidean topology, e.g., $\mathcal S^1=\partial \mathcal B(0,1)=\partial \mathcal D(0,1)$; for $h\geq1$, we denote by $G^{(h)}(z_0)$ the $h$-th derivative of the function $G(z)$ evaluated at $z=z_0$;
	finally, we let $\vc 1$ and $\vc 0$ denote the column vectors of $1$'s and $0$'s, respectively, whose length will be determined by the context.
	

	\subsection{Background}\label{sec:back}
	\bigskip
	
	A \emph{Galton--Watson} (GW) \emph{branching process} is a particular discrete-time Markov chain $\{Z_n\}_{n\geq 0}$ that takes its values in $\mathbb{N}:=\{0,1,2,\ldots\}$, where 0 is an absorbing state. It describes the evolution of a population in which each individual lives for one unit of time (generation), at the end of which it gives birth to a random (nonnegative integer) number $\theta$ of children, independently of the rest of the population. The distribution of $\theta$ is called the \emph{offspring distribution}, and is denoted by $\vc p:=(p_j)_{j\in \mathbb{N}}$, where $p_j:= \mathbb P(\theta=j)\geq 0,$ $ \sum_{j\geq 0} p_j=1$; more explicitly, $p_j$ represents the probability for an individual to have $j$ offspring at the end of her/his lifetime. For each $n\geq 0$, the random variable  $Z_n$ represents the number of individuals in the population at generation $n$. We define the \emph{probability generating function} of the offspring distribution as $$
	P(z):=\sum_{j\geq 0} p_j z^j,\quad z\in[0,1].$$ 
	The mean offspring number per individual in the GW process is given by $$m:=\mathbb E( \theta)=\frac{d P(z)}{dz}\Big|_{z=1} = \sum_{j\geq 1}j p_j.$$
	Given the initial population size $Z_0$, the size $Z_{n}$ of the population at generation $n\geq 1$ evolves according to the recurrence formula
	$$Z_{n}=\sum_{i=1}^{Z_{n-1}} \theta_i^{(n)},$$where $\{ \theta_i^{(n)}\}_{i,n}$ is a family of independent random variables taking values in $\mathbb{N}$ with the same distribution as $\theta$, and $Z_n:=0$ if $Z_{n-1}=0$ (hence 0 is absorbing). Indeed, the population size at generation $n$, $Z_n$, is made of the children of the $Z_{n-1}$ individuals present in the population at generation $n-1$ (the random variable $\theta_i^{(n)}$ represents the number of children of the $i$th individual present in the population at generation $n-1$).
	Before extinction, the GW process takes its values in the space $\mathbb{N}_0:=\mathbb{N}\setminus\{0\}$. 
	
	In the sequel, we assume $0<p_0+p_1<1$. For any initial state $\ell\in \mathbb{N}_0$ and any initial probability distribution $\vc \mu:=(\mu_\ell)_{\ell\in \mathbb{N}_0}$, we let $\mathbb P_\ell(\cdot):=\mathbb P(\cdot \,|\,Z_0=\ell)$, and $\mathbb P_\mu(\cdot):=\sum_{\ell\geq 1}\mu_\ell \mathbb P_\ell(\cdot )$ be the probability measure conditional on the distribution of $Z_0$.

	We distinguish between three cases:
	\begin{itemize}
		\item the \emph{subcritical} case $m<1$: the population becomes extinct with probability one, that is, for any initial probability distribution $\vc \mu$, $\mathbb P_\mu(\exists n<\infty: Z_n=0)=1$; the expected extinction time is finite.
		\item the \emph{critical} case $m=1$: the population becomes extinct with probability one, and the expected extinction time is infinite.
		\item the \emph{supercritical} case $m>1$: the population has a positive probability of surviving, and therefore the expected extinction time is infinite.
	\end{itemize}We refer to \cite{harris, athreya2004branching,haccou2005branching} for basic properties of GW processes.
	

	We say that $\{Z_n\}$ has a \emph{Yaglom limit} if there exists a probability distribution $\vc g:=(g_j)_{j\in \mathbb{N}_0}$ (that is with $g_j\in [0,1]$ for all $j\geq1$, and  $\sum_{j\geq 1} g_j=1$) such that, \emph{for any initial population size} $\ell\in \mathbb{N}_0$ and any state $j\in \mathbb{N}_0$,
	\[
	\lim_{n\rightarrow\infty} \mathbb P_\ell(Z_n=j\,|\,Z_n>0)=g_j;
	\]in other words, $\vc g$ is the \emph{asymptotic} distribution of the population size at generation $n$, conditional on non-extinction by generation $n$.
	When it exists, the Yaglom limit $\vc g$ is a \emph{quasi-stationary distribution}, that is, 
	for all $n\geq 0$ and for any state $j\in \mathbb{N}_0,$
	\[
	\mathbb P_g(Z_n=j\,|\,Z_n>0)=g_j;
	\]in other words, if the process starts with a number of individuals distributed according to $\vc g$, then the distribution of the population size at any subsequent generation $n\geq 1$ remains $\vc g$.

	There is no quasi-stationary distribution in the critical and the supercritical case because conditioning on the event $\{Z_n>0\}$  results in the process growing without bounds as $n\rightarrow\infty$. However, in the subcritical case there is a unique Yaglom limit; the next theorem states this formally, and we refer to \cite[Theorem 6]{meleard2012quasi} for a proof.
	
	\begin{theorem}[Yaglom \cite{yaglom1947certain}] Let $\{Z_n\}$ be a GW process with offspring generating function $P(z)$ and mean offspring $m<1$. There exists a unique probability distribution $\vc g=(g_j)_{j\in \mathbb{N}_0}$ such that, for any initial probability distribution $\vc\mu$ with finite mean on $\mathbb{N}_0$ (that is, $\sum_{j\geq1}j \mu_j<\infty$), $\vc g$ satisfies
		\begin{equation}\label{qld}
		\lim_{n\rightarrow\infty} \mathbb P_\mu(Z_n=j\,|\,Z_n>0)=g_j.
		\end{equation}
		The distribution $\vc g$ is a Yaglom limit for $\{Z_n\}$, and its generating function $G(z):=\sum_{j\geq 1} g_j z^j,$ $z\in[0,1],$ satisfies Equation \eqref{eq:G} on $[0,1]$.
	\end{theorem}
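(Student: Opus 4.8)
\emph{Setup and monotone convergence.}
Since no regularity of $P$ beyond $m=P'(1)<1$ is assumed, the plan is to argue entirely with real generating functions on $[0,1]$. Write $P_n$ for the $n$-fold composition of $P$, so that $P_n$ is the probability generating function of $Z_n$ under $\mathbb P_1$; note that $m<1$ forces $p_0\in(0,1)$, hence $P_n(0)\uparrow 1$ with $P_n(0)<1$ for every $n$. Put $u_n:=1-P_n(0)=\mathbb P_1(Z_n>0)\downarrow 0$ and
\[
f_n(z):=\mathbb E_1\!\left(z^{Z_n}\mid Z_n>0\right)=\frac{P_n(z)-P_n(0)}{1-P_n(0)},\qquad z\in[0,1],
\]
a probability generating function with $f_n(0)=0$ and $f_n(1)=1$. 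Let $\psi(v):=1-P(1-v)$ on $[0,1]$: then $\psi(0)=0$, $\psi$ is increasing and concave (because $P$ is convex), and $\psi'(0)=m$, so $v\mapsto\psi(v)/v$ is non-increasing on $(0,1]$. With $v_n(z):=1-P_n(z)$ one has $v_{n+1}=\psi\circ v_n$ and $v_n(z)\leq v_n(0)$, whence
\[
1-f_{n+1}(z)=\frac{v_{n+1}(z)}{v_{n+1}(0)}=\frac{\psi(v_n(z))/v_n(z)}{\psi(v_n(0))/v_n(0)}\cdot\frac{v_n(z)}{v_n(0)}\ \geq\ 1-f_n(z).
\]
Thus $\big(1-f_n(z)\big)_n$ is non-decreasing and bounded by $1$, so $f_n(z)\downarrow G(z)$ for each $z\in[0,1)$ with $G(0)=0$; a short compactness-plus-tail-estimate argument upgrades this to $\mathbb P_1(Z_n=j\mid Z_n>0)\to g_j\geq 0$ for every $j$, with $G(z)=\sum_{j\geq1}g_jz^j$ on $[0,1)$ and $\sum_jg_j\leq 1$.

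\emph{Functional equation and properness.}
From $P_{n+1}=P_n\circ P$ one obtains the finite-$n$ identity $f_{n+1}(z)\big(1-f_n(p_0)\big)=f_n(P(z))-f_n(p_0)$, in which $f_n(p_0)<1$ so that no division by zero occurs. Using $P_{n+1}(0)=P_n(p_0)$ gives $1-f_n(p_0)=u_{n+1}/u_n=\psi(u_n)/u_n\to\psi'(0)=m$, hence $G(p_0)=1-m$; letting $n\to\infty$ in the identity turns it into exactly \eqref{eq:G} on $[0,1)$. Finally, sending $z\to1^-$ in \eqref{eq:G} and using $G(1^-)=\sum_jg_j$ (Abel) yields $\sum_jg_j=m\sum_jg_j+1-m$, so $\sum_jg_j=1$: $\vc g$ is a probability distribution, $G$ is continuous at $1$, and \eqref{eq:G} holds on all of $[0,1]$.

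\emph{Arbitrary initial laws, Yaglom limit, quasi-stationarity, uniqueness.}
For $Z_0=\ell$ one has $\mathbb E_\ell(z^{Z_n})=P_n(z)^\ell$; factoring the geometric sums in $\mathbb E_\ell(z^{Z_n}\mid Z_n>0)=(P_n(z)^\ell-P_n(0)^\ell)/(1-P_n(0)^\ell)$ and using $P_n(z),P_n(0)\to1$ shows this too tends to $G(z)$, so $\vc g$ is a Yaglom limit (the case $\vc\mu=\delta_\ell$ of \eqref{qld}). For general $\vc\mu$ with $\sum_\ell\ell\mu_\ell<\infty$, write $\mathbb P_\mu(Z_n=j\mid Z_n>0)$ as a ratio of $\mu$-averages, divide numerator and denominator by $u_n$, observe that $\mathbb P_\ell(Z_n>0)/u_n=(1-(1-u_n)^\ell)/u_n$ is at most $\ell$ and tends to $\ell$, and apply dominated convergence in $\ell$ with dominating sequence $(\ell\mu_\ell)_\ell$---this is exactly where finiteness of the mean of $\vc\mu$ is used---to recover the limit $g_j$. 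Quasi-stationarity follows from \eqref{eq:G}: $\mathbb E_g(z^{Z_1})=\sum_\ell g_\ell P(z)^\ell=G(P(z))=mG(z)+1-m$, so $\mathbb P_g(Z_1=0)=1-m$ and $\mathbb E_g(z^{Z_1}\mid Z_1>0)=G(z)$; an induction on $n$ via the Markov property then gives $\mathbb P_g(Z_n=j\mid Z_n>0)=g_j$ for all $n$. Uniqueness of $\vc g$ is immediate, since any distribution satisfying \eqref{qld} for all admissible $\vc\mu$ must coincide with the limit obtained from $\vc\mu=\delta_1$; it is also consistent with the uniqueness of the probability generating function solving \eqref{eq:G} \cite[Theorem 1]{heathcote}.

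\emph{Main obstacle.}
Monotone convergence alone gives only $\sum_jg_j\leq1$; the genuine difficulty is excluding escape of mass to infinity, i.e.\ tightness of the conditioned laws. The efficient remedy is the route above---derive the Schr\"oder-type equation \eqref{eq:G} for the limit $G$ and read off $G(1^-)=1$ from it---but this requires care in justifying each limiting passage (the finite-$n$ identity, the constancy of $\lim_n\big(1-f_n(p_0)\big)$, and the interchange of $z\to1^-$ with \eqref{eq:G}), as well as the coefficient-extraction lemma of the first step that converts convergence of the $f_n$ on $[0,1)$ into convergence of the individual conditional probabilities.
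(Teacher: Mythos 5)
Your argument is correct, but it cannot be compared line-by-line with a proof in the paper for a simple reason: the paper does not prove this theorem at all — it attributes the result to Yaglom and explicitly defers to M\'el\'eard and Villemonais \cite[Theorem 6]{meleard2012quasi} for a proof. What you have written is a self-contained version of the classical analytic proof (in the spirit of Joffe and of Heathcote--Seneta--Vere-Jones, and essentially the argument reproduced in the cited survey and in \cite[Section I.8]{athreya2004branching}): monotonicity of the conditioned generating functions $f_n$ via concavity of $\psi(v)=1-P(1-v)$, the finite-$n$ identity whose limit yields the Schr\"oder-type equation \eqref{eq:G}, properness read off by letting $z\to1^-$ in \eqref{eq:G} (equivalently $Q(1^-)=mQ(1^-)$ with $Q=1-G$, so no escape of mass without any $x\log x$ or second-moment hypothesis), passage from $\delta_1$ to $\delta_\ell$ by factoring $P_n(z)^\ell-P_n(0)^\ell$, and then to general $\vc\mu$ with finite mean by dominated convergence — correctly isolating the only place where $\sum_\ell \ell\mu_\ell<\infty$ is used. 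What your route buys, relative to the paper's citation, is a transparent derivation of exactly the two facts the paper actually exploits later (that $\vc g$ is proper and that $G$ solves \eqref{eq:G} on $[0,1]$), plus quasi-stationarity and the trivial uniqueness statement. Two compressed steps should be flagged as standard rather than obvious: the ``compactness-plus-tail-estimate'' upgrade is the continuity theorem for probability generating functions (pointwise convergence on $[0,1)$ implies coefficientwise convergence to a possibly defective law), and the quasi-stationarity induction should be run through $\mathbb P_g(Z_n=j)=m^n g_j$ for $j\geq1$. Also, $m<1$ only forces $p_0>0$; that $p_0<1$ comes from the paper's standing assumption $0<p_0+p_1<1$, not from subcriticality alone — a cosmetic point that does not affect the proof.
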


	\begin{remark}
		Since the series that define $P(z)$ and $G(z)$ converge absolutely for all $z$ in the unit disc $\mathcal D(0,1)$ then, by continuity, \eqref{eq:G} holds $\forall z\in \mathcal D(0,1)$.
	\end{remark}
	\begin{remark}
		There exist quasi-stationary distributions which are not a Yaglom limit. In particular, for the subcritical GW process, there exists an infinite number of quasi-stationary distributions which are obtained as the limit in \eqref{qld} for some initial probability distributions $\vc\mu$ \emph{with infinite mean}. We refer to  \cite[Theorem 6]{meleard2012quasi} for more detail.
	\end{remark}


	In the remainder of the paper we assume that the GW process is subcritical ($m<1$) and
	we use the term ``the quasi-stationary distribution of the GW process'' when referring to its Yaglom limit $\vc g$. We refer the reader to \cite[Section I. 8]{athreya2004branching} and \cite{meleard2012quasi} for more details about quasi-stationary distributions of GW processes. 
	
	%
	
	It is worth mentioning another characterization for the quasi-stationary distribution of a GW process.
	Let $Q$ denote the truncated probability transition matrix of the process $\{Z_n\}$ corresponding to the (transient) positive integer states. Then the quasi-stationary distribution satisfies
	\begin{equation}\label{PF}\vc g\, Q=m\, \vc g,\end{equation}where $\vc g$ corresponds to the normalized Perron-Frobenius left eigenvector associated with the Perron-Frobenius eigenvalue $m$. The solution of \eqref{PF} is unique up to a multiplicative constant. This characterization is not specific to the GW process and holds for any absorbing Markov chain. Note that in the case of the GW process, the matrix $Q$ is dense and semi-infinite, hence solving \eqref{PF} is a non trivial task. A classical approach to approximate the solution of \eqref{PF} consists in considering the sequence of $N\times N$ northwest corner truncations of $Q$ and computing their respective left Perron-Frobenius eigenvectors \cite[Chapter 6]{Seneta}. In our context, this would require to solve  eigenvector problems for non sparse and possibly large (as $N$ grows) matrices. In this paper we introduce a new procedure that exploits more naturally the properties of the GW process. 
	
	
	We end this section by defining the \emph{linear fractional branching processes}, which form a special class of GW  processes amenable to explicit computation. In these processes, the offspring distribution is modified geometric, that is, 
	$$p_j=(1-p_0)(1-p)p^{j-1},\quad j\geq 1,$$fully characterized by just two parameters: $p_0\in[0,1)$, and $p\in[0,1)$. Note here that $p_j\geq p_{j+1}$ for all $j\geq 1$. The mean offspring is given by
	$m={1-p_0}/{1-p},$ therefore the process is subcritical ($m<1$) if and only if $p_0>p$.
	The corresponding progeny generating function is given by
	$$P(z)=p_0+(1-p_0) \dfrac{(1-p) z}{1-pz}, \quad z\in[0,p^{-1}).$$
	
	It is not difficult to verify that the quasi-stationary distribution of a linear-fractional GW process is geometric with parameter $p/p_0$, that is, \begin{equation}\label{eq:lin-frac}g_j=\left( 1-\dfrac{p}{p_0}\right)\left(\dfrac{p}{p_0}\right)^{j-1},\; j\geq 1,\quad \textrm{and} \quad G(z)= \left(1 - \frac{p}{p_0}\right)\frac{z}{1-\frac{p}{p_0}z}.\end{equation}
	
	We shall use the linear fractional branching process in Section~\ref{sec:tests} as a benchmark tool to evaluate the quality of our numerical approximation methods for the computation of the quasi-stationary distribution.

	\section{Properties of $G(z)$}\label{sec:properties} 
	In this section we study the asymptotic behavior of the coefficients $g_j$, or in other words, the tail behavior of the quasi-stationary distribution $\vc g$.
	For computational purposes, we are interested in understanding the decay properties of these coefficients in order to ensure that a limited number of them is sufficient to describe $G(z)$ with arbitrary accuracy. For example, the existence of the $h$-th derivative $G^{(h)}(1)$ of $G(z)$ at $z=1$, which corresponds to the $h$-th factorial moment of $\vc g$,  provides an algebraic decay of (at least) order $h$, because $G^{(h)}(1) = \sum_{j \geq h} \frac{j!}{(j-h)!} g_j \approx \sum_{j \geq h} j^h g_j$.
	Exponential decay is directly linked to the radius of convergence of  $G(z)=\sum_{j\ge 0} g_jz^j$ and, consequently, to the domain of analyticity of $G(z)$. Indeed, given $R>0$, it is well known that a formal power series $\sum_{j\ge 0} g_jz^j$ defines an analytic function $G(z)$ on $\mathcal B(0,R)$ if and only if,  for all $ r\in(0,R)$ and $j\geq 0$, $|g_j|\leq\max_{|z|=r}|G(z)| \cdot r^{-j}$ \cite[Proposition IV.1]{flajo}. Since in our case the power series has real non-negative coefficients, $G(z)$ is analytic on $\mathcal B(0,R)$ if and only if
	\begin{equation}\label{eq:exp-decay}
	g_j\leq G(r) \cdot r^{-j}\qquad  \forall r\in(0,R), j\geq 0 .
	\end{equation}
	
	From a computational perspective, we would like $G(z)$  to be analytic on a disc with radius bigger than $1$. This would allow us to choose $r>1$ in $\eqref{eq:exp-decay}$, ensuring that at most $\left\lceil{\log(u^{-1}G(r) )}/{\log(r)}\right\rceil$ coefficients $g_j$ are above the machine precision $u$. This property is equivalent to having $G(z)$ analytic at $z=1$. The proof of the following proposition is provided in the appendix.
	\begin{proposition}\label{prop:1}
		Let $P(z)$ be analytic on $\mathcal B(0,r_P)$ with $r_P>1$. Then,
		$G(z)$ is analytic at $z=1$  if and only if  there exists $r_G>1$ such that $G(z)$ is analytic on $\mathcal B(0,r_G)$.
	\end{proposition}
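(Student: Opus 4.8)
The plan is to prove the two implications separately. The \emph{if} direction is immediate: if $G$ is analytic on $\mathcal B(0,r_G)$ with $r_G>1$, then $1\in\mathcal B(0,r_G)$, so $G$ is analytic at $z=1$. All the content is therefore in the \emph{only if} direction, and the strategy is to use the functional equation \eqref{eq:G} to manufacture an analytic continuation of $G$ slightly beyond the closed unit disc.

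First I would record the regularity already available. Since $G(z)=\sum_{j\geq1}g_jz^j$ has non-negative coefficients with $\sum_{j\geq1}g_j=1$, the series converges absolutely on $\mathcal D(0,1)$, so $G$ is analytic on $\mathcal B(0,1)$. By hypothesis $G$ extends analytically to some ball $\mathcal B(1,\varepsilon)$, and this extension is unique and agrees with the power series on the overlap (identity theorem), so $G$ is a well-defined analytic function on the open set $U:=\mathcal B(0,1)\cup\mathcal B(1,\varepsilon)$. I would also note that the standing assumptions $0<p_0+p_1<1$ and $m<1$ force $p_0>0$ and $m>0$: if $p_0=0$ then $m=\sum_{j\geq1}jp_j\geq\sum_{j\geq1}p_j=1$, contradicting $m<1$; and $m=0$ forces $p_0=1$, contradicting $p_0+p_1<1$.

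The crucial geometric step is the claim that $P$ maps the closed unit disc into $U$, more precisely $P(\mathcal D(0,1))\subseteq\mathcal B(0,1)\cup\{1\}$. For $|z|<1$ we have $|P(z)|\leq P(|z|)<P(1)=1$ since $P$ is strictly increasing on $[0,1]$ (here $m>0$ is used), so such $z$ map into $\mathcal B(0,1)$. For $|z|=1$, $|P(z)|\leq\sum_{j}p_j|z|^j=1$, and equality in the triangle inequality forces all nonzero terms $p_jz^j$ to lie on a common ray from the origin; since $p_0>0$ is positive real that ray is $\mathbb R_{\geq0}$, hence $z^j=1$ for every $j$ with $p_j>0$, which gives $P(z)=\sum_jp_j=1$. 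Thus a unit-modulus point is either mapped strictly inside $\mathcal B(0,1)$ or onto $1$, and in both cases the image lies in $U$. This is precisely where the periodic case (e.g.\ $P(z)=p_0+p_2z^2$) is disposed of: the extra boundary points with $|P(z)|=1$ all land on $1$, where $G$ is analytic by assumption.

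Finally I would pass from the closed disc to a slightly larger one by compactness and then invoke the functional equation. Since $\mathcal D(0,1)$ is compact, $P$ is continuous on $\mathcal B(0,r_P)\supseteq\mathcal D(0,1)$, and $P(\mathcal D(0,1))\subseteq U$ with $U$ open, the preimage $P^{-1}(U)$ is open and contains the compact set $\mathcal D(0,1)$, so there is $\delta\in(0,r_P-1)$ with $P(\mathcal B(0,1+\delta))\subseteq U$. Then $\widetilde G(z):=m^{-1}\bigl(G(P(z))-(1-m)\bigr)$ is analytic on $\mathcal B(0,1+\delta)$ as a composition of analytic maps (and $m>0$), and by \eqref{eq:G} it equals $G$ on $[0,1]$, hence on all of $\mathcal B(0,1)$ by analytic continuation; so $\widetilde G$ extends $G$ analytically to $\mathcal B(0,r_G)$ with $r_G:=1+\delta>1$. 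I expect the main obstacle to be the boundary analysis in the geometric step — making the equality case of the triangle inequality airtight and correctly handling periodic offspring distributions — together with the (easy but necessary) verification that $p_0>0$ and $m>0$, so that the rearrangement $G(z)=m^{-1}(G(P(z))-1+m)$ is legitimate.
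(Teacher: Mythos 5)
Your proof is correct and follows essentially the same route as the paper's: the heart of both arguments is that for $|z|=1$ the equality $|P(z)|=1$ forces $P(z)=1$ (where $G$ is analytic by hypothesis), so the rearranged functional equation $G=m^{-1}(G\circ P-1+m)$ continues $G$ past the unit circle. The only difference is cosmetic: you globalize in one step via compactness of $P^{-1}(U)\supseteq\mathcal D(0,1)$ and a single formula on $\mathcal B(0,1+\delta)$, whereas the paper builds local continuations on neighborhoods $A_z$ of each boundary point and then extracts $r_G>1$ from the resulting open set containing $\mathcal D(0,1)$.
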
 
	In what follows we study the interplay between the regularity of the offspring distribution and that of the quasi-stationary distribution.
	\subsection{Derivatives of $G(z)$ at $z=1$}\label{sec:deriv}
	We start by looking at the existence of the derivatives of $G(z)$ at $z=1$. 
	The next theorem gives a necessary and sufficient condition on the offspring distribution for the mean of the quasi-stationary distribution to exist, that is, for $G^{(1)}(1)<\infty$.
	\begin{theorem}[Heathcote \textit{et al.} \cite{heathcote}]\label{thm:heath}
		\[G^{(1)}(1)<\infty\quad \Leftrightarrow \quad \sum_{j=2}^\infty j \log(j)\cdot p_j <\infty.\]
	\end{theorem}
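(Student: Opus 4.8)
The plan is to exploit the functional equation \eqref{eq:G} directly, since differentiating $G(P(z)) = mG(z) + 1-m$ at $z=1$ relates $G^{(1)}(1)$ to $P^{(1)}(1) = m < 1$, but only formally: one must justify the existence of $G^{(1)}(1)$ as a finite limit, which is exactly what is at stake. The natural route is to iterate \eqref{eq:G}. Writing $P_n(z)$ for the $n$-fold composition of $P$ with itself, induction on \eqref{eq:G} gives
\begin{equation}\label{eq:G-iterate}
G(P_n(z)) = m^n G(z) + (1-m^n), \qquad z\in[0,1],
\end{equation}
so that $G(z) = m^{-n}\bigl(G(P_n(z)) - 1\bigr) + 1$. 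Since $m<1$, for fixed $z<1$ we have $P_n(z)\to q=1$ (the extinction probability, here equal to $1$ in the subcritical case), and the classical Galton--Watson asymptotics state that $\bigl(1-P_n(z)\bigr)/m^n$ converges to a finite positive limit precisely under the $x\log x$ condition on the offspring distribution — this is the content of the Kesten--Stigum / Heathcote--Seneta--Tavaré theory. I would make this the backbone: set $Q(z) := \lim_{n\to\infty} m^{-n}\bigl(1 - P_n(z)\bigr)$ when it exists, show this limit is finite and nonzero on $[0,1)$ if and only if $\sum_{j\ge 2} j\log j\, p_j < \infty$, and then read off the behavior of $G$ near $z=1$ from \eqref{eq:G-iterate}.

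The key computational step is to rewrite $G^{(1)}(1) = \lim_{z\uparrow 1} \frac{G(z) - 1}{z - 1} = \lim_{z\uparrow 1}\frac{1 - G(z)}{1 - z}$ and substitute the iterated identity: $1 - G(z) = m^{-n}\bigl(1 - G(P_n(z))\bigr)$. Dividing by $1-z$ and writing $1-z = m^{-n}(1-z)$ somewhat trivially is not enough; instead I would compare $1 - G(P_n(z))$ with $1 - P_n(z)$ near $z=1$ using that $G$ is a probability generating function with $G(1)=1$, so $1 - G(w) \sim G^{(1)}(1)(1-w)$ as $w\uparrow 1$ provided $G^{(1)}(1)$ exists, and conversely a Tauberian/monotonicity argument recovers existence of the slope from boundedness of $\frac{1-G(P_n(z))}{1-P_n(z)}$. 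Combining with the known equivalence ``$\lim_n m^{-n}(1-P_n(z))$ finite $\iff$ $\sum j\log j\, p_j < \infty$'' (and using that $(1-P_n(z))/(1-z)$ is monotone in $n$, being a product of $P^{(1)}$ values along the orbit, which stays bounded away from $0$ and $\infty$ iff the series converges) yields the claimed equivalence.

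The direction $\sum j\log j\, p_j < \infty \Rightarrow G^{(1)}(1) < \infty$ is the more delicate one and I expect it to be the main obstacle: here one must produce a genuine finite bound on $\sum_{j\ge 1} j g_j$, not merely a formal differentiation. The cleanest way is to combine \eqref{eq:G-iterate} with the fact (standard for subcritical GW under the $x\log x$ condition) that $W_n := Z_n/\mathbb{E}_1(Z_n) = Z_n/m^n$ is uniformly integrable; then on $\{Z_n > 0\}$ the conditional law of $Z_n$ has means growing like $m^n/\mathbb{P}_1(Z_n>0) \sim c$ (bounded), and passing to the Yaglom limit transfers this uniform bound on conditional means to finiteness of $\sum_j j g_j = G^{(1)}(1)$. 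The converse direction is softer: if $G^{(1)}(1)<\infty$ then differentiating \eqref{eq:G-iterate} legitimately at $z=1$ (the derivative of an analytic—or at least $C^1$ up to the boundary—pgf with finite mean exists one-sidedly there) gives $G^{(1)}(1) = m^n G^{(1)}(1)\cdot\prod$ consistency, forcing the product $\prod_{k} P^{(1)}(P_k(1))$ structure to converge, which by the Heathcote--Seneta criterion is equivalent to the series condition. A careful write-up would isolate the $x\log x \Leftrightarrow$ finiteness-of-limit lemma as a citable black box (it is \cite{heathcote} itself, or Athreya--Ney \cite{athreya2004branching} Ch.~I) and then the proof reduces to the two transfer arguments between $P_n$ and $G$ via \eqref{eq:G-iterate}.
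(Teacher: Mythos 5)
You should first note that the paper does not prove Theorem~\ref{thm:heath} at all: it is imported verbatim from \cite{heathcote} and then used as an external ingredient (for instance in the appendix proof of Lemma~\ref{lem:deriv}), so there is no in-paper argument to compare against and your sketch must stand on its own. As written, it does not, because its core is circular. The transfer between $G^{(1)}(1)$ and the branching-process asymptotics is in fact the easy part: from \eqref{eq:G} one has $1-G(P(z))=m\,(1-G(z))$, hence $1-G(P_n(0))=m^n$, and since $(1-G(z))/(1-z)=\sum_j g_j(1+z+\dots+z^{j-1})$ is nondecreasing on $[0,1)$ while $P_n(0)\uparrow 1$, one gets $G^{(1)}(1)=\lim_n m^n/(1-P_n(0))=1/c$, where $c:=\lim_n m^{-n}\bigl(1-P_n(0)\bigr)$ exists by monotonicity (because $1-P(s)\leq m(1-s)$). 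After this reduction the entire content of the theorem is the criterion $c>0\Leftrightarrow\sum_{j\geq 2} j\log j\; p_j<\infty$, and this is exactly the ``citable black box'' you propose to take from \cite{heathcote} (or \cite{athreya2004branching}) --- i.e.\ the nontrivial part of the very statement you were asked to prove. A proposal that defers that lemma to its own source has not proved the theorem; it has restated it.

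Beyond the circularity, two of the steps you do spell out are wrong. First, in the subcritical case $W_n=Z_n/m^n$ is \emph{not} uniformly integrable: $W_n\to 0$ almost surely (extinction is certain) while $\mathbb E(W_n)=1$, and for any fixed $K$ and all large $n$ one has $\{W_n>K\}\supseteq\{Z_n>0\}$ with $\mathbb E\bigl(W_n;\,Z_n>0\bigr)=1$, so the UI condition fails for every $K$; the statement you actually need concerns the conditioned laws of $Z_n$ given $Z_n>0$ (and even that is only required for equality of means --- Fatou alone gives finiteness of $\sum_j j g_j$ once $c>0$). Second, the converse step ``differentiate the iterated equation at $z=1$'' is vacuous: since $P_n(1)=1$ and $P_n^{(1)}(1)=m^n$, the differentiated identity reads $m^nG^{(1)}(1)=m^nG^{(1)}(1)$, no product of the form $\prod_k P^{(1)}(P_k(1))$ ever appears (each factor is just $m$), and nothing is forced about the series. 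The correct converse is the subsequence computation above: if $c=0$ then $(1-G(P_n(0)))/(1-P_n(0))=m^n/(1-P_n(0))\to\infty$, so $G^{(1)}(1)=\infty$. In short, your skeleton (iterate \eqref{eq:G} and compare with $1-P_n$) is the right classical route, but the heart of the theorem --- the $x\log x$ characterisation of $c>0$ --- is missing from the proposal, and the two auxiliary claims above would need to be repaired.
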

	Higher order moments of the quasi-stationary distribution are studied in \cite{bagley}, where the author shows that $G^{(h)}(1)$ is finite if and only if $P^{(h)}(1)$ is finite for  $1<h\in\mathbb N$.  In the appendix we report a simpler and shorter proof of this fact, that relies on algebraic arguments only. 
	\subsection{Domain of analyticity of $G(z)$}
	Motivated by the preceding results, we wonder if assuming the analyticity of $P(z)$ on an open disc of radius bigger than $1$ is enough to ensure the same property for $G(z)$. The answer to this question is affirmative, as we formalize in the next theorem. 
	\begin{theorem}\label{thm:radius}
		If $P(z)$ has radius of convergence $r_P>1$, then $G(z)$ has radius of convergence $ r_G>1$.
	\end{theorem}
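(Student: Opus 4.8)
The plan is to use Proposition~\ref{prop:1} to reduce the claim to the single statement ``$G(z)$ is analytic at $z=1$'', and then to prove the latter by transporting a local linearization of $P$ around its fixed point $1$ into an explicit local formula for $G$. Set $H(z):=1-G(z)$. As already observed, equation~\eqref{eq:G} holds on the whole disc $\mathcal D(0,1)$, and there it is equivalent to the Schröder equation
\[
H(P(z))=m\,H(z),\qquad z\in\mathcal D(0,1),
\]
with $H$ analytic on $\mathcal B(0,1)$, continuous on $\mathcal D(0,1)$ and $H(1)=0$. Because $r_P>1$, the function $P$ is analytic in a neighbourhood of $1$, with $P(1)=1$ and $0<P'(1)=m<1$; Koenigs' classical linearization theorem then yields an analytic function $\sigma$ on some disc $\mathcal B(1,\delta)$ such that $\sigma(1)=0$, $\sigma'(1)=1$ and $\sigma(P(z))=m\,\sigma(z)$ there (explicitly, $\sigma(z)=\lim_{k\to\infty}m^{-k}\bigl(P^{\circ k}(z)-1\bigr)$, whose convergence can be shown directly by shrinking $\delta$ so that $|P(z)-1|\leq m'|z-1|$ on $\mathcal B(1,\delta)$ with $m<m'<\sqrt m$). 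After shrinking $\delta$ I may also assume that $\sigma$ is injective on $\mathcal B(1,\delta)$ --- hence nonzero on $\mathcal B(1,\delta)\setminus\{1\}$ --- and that $P\bigl(\mathcal B(1,\delta)\bigr)\subseteq\mathcal B(1,\delta)$, so that $P^{\circ k}\to 1$ uniformly on $\mathcal B(1,\delta)$.

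Next I would work on the lens $\Omega:=\mathcal B(0,1)\cap\mathcal B(1,\delta)$, which is nonempty, convex (hence connected), and does not contain $1$. Since $P$ maps $\mathcal B(0,1)$ into itself (as $|P(z)|\leq P(|z|)<1$ for $|z|<1$) and $\mathcal B(1,\delta)$ into itself, we have $P(\Omega)\subseteq\Omega$; in particular $\sigma$ is nonvanishing on $\Omega$, so $\phi:=H/\sigma$ is well defined and analytic on $\Omega$, and combining the two Schröder equations gives
\[
\phi(P(z))=\frac{H(P(z))}{\sigma(P(z))}=\frac{m\,H(z)}{m\,\sigma(z)}=\phi(z),\qquad z\in\Omega.
\]
Iterating, $\phi=\phi\circ P^{\circ k}$ on $\Omega$ for every $k\geq1$, and $P^{\circ k}(z)\to1$; it remains only to identify the limit of $\phi(w)$ as $w\to1$ within $\Omega$.

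This is where the hypothesis $r_P>1$ is really used: it forces the $p_j$ to decay geometrically, so $\sum_{j\geq2}j\log(j)\,p_j<\infty$ and Theorem~\ref{thm:heath} gives $G^{(1)}(1)<\infty$, i.e.\ $\sum_j j\,g_j<\infty$. Hence $G'$ extends continuously to $\mathcal D(0,1)$ (the series $\sum_{j\geq1}j g_j z^{j-1}$ converging uniformly there), so $\bigl(H(w)-H(1)\bigr)/(w-1)\to H'(1)=-G^{(1)}(1)$ while $\sigma(w)/(w-1)\to\sigma'(1)=1$ as $w\to1$; therefore $\phi(w)\to -G^{(1)}(1)$. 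Passing to the limit in $\phi(z)=\phi(P^{\circ k}(z))$ gives $\phi\equiv -G^{(1)}(1)$ on $\Omega$, that is,
\[
G(z)=1+G^{(1)}(1)\,\sigma(z),\qquad z\in\Omega.
\]
The right-hand side is analytic on all of $\mathcal B(1,\delta)$ and agrees with $G$ on the nonempty open set $\Omega$, so it is an analytic continuation of $G$ across $z=1$. Thus $G$ is analytic at $1$, and Proposition~\ref{prop:1} completes the proof.

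I expect the main obstacle to be precisely this last identification. Note that the more obvious route --- subtracting the candidate continuation $1+G^{(1)}(1)\sigma$ from $G$ and iterating the relation $D(P(z))=m D(z)$ for the difference $D$ --- does not work, because one can only control $P$ near $1$ by a contraction factor $m'$ slightly larger than $m$, and the factor $m^{-k}$ appearing in $D=m^{-k}(D\circ P^{\circ k})$ then diverges. Dividing $H$ by $\sigma$ is exactly what cancels this factor and makes the limiting argument go through; beyond that, the only analytic input needed is the finiteness of the first moment $G^{(1)}(1)$, which follows from $r_P>1$ via Theorem~\ref{thm:heath}.
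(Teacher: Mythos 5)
Your proof is correct, but it is not the proof the paper gives: you carry out in full the alternative route that the authors only mention in passing, namely combining Proposition~\ref{prop:1} with Koenigs' linearization theorem. The paper's own proof stays entirely at the level of Taylor coefficients at $z=1$: differentiating the functional equation gives the recursion \eqref{eq:derivative} via Fa\`a di Bruno's formula, the Bell polynomials are converted into coefficients of powers of $\widetilde P(z)-1$ by Lemma~\ref{lem:bell-id}, these coefficients are bounded by choosing $\widetilde r\in(0,\psi_P-1)$, and an induction yields $G^{(h)}(1)\leq \theta_G\,\rho^{-h}\,h!$, i.e.\ a convergent Taylor expansion at $1$; this is self-contained, purely algebraic, and in the same spirit as Lemma~\ref{lem:deriv}. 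You instead reduce to analyticity at $1$ (the same first step, via Proposition~\ref{prop:1}), construct the Koenigs map $\sigma$ with $\sigma\circ P=m\,\sigma$, and show that $\phi=(1-G)/\sigma$ is $P$-invariant on the lens $\mathcal B(0,1)\cap\mathcal B(1,\delta)$, hence constant, by iterating towards the attracting fixed point and identifying the boundary limit $-G^{(1)}(1)$, whose finiteness you import from Theorem~\ref{thm:heath}. Your route buys a cleaner structural conclusion --- locally $G(z)=1+G^{(1)}(1)\,\sigma(z)$, so $G$ inherits analyticity directly from the linearizer --- and it only requires control of the first moment of $\vc g$ rather than uniform bounds on all derivatives; the cost is reliance on two external ingredients (Koenigs and the Heathcote criterion) where the paper's argument is elementary and also meshes with the explicit decay-rate discussion around \eqref{psiP}. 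The one delicate step, identifying the constant value of $\phi$ (where you rightly observe that the naive subtraction argument fails because of the diverging $m^{-k}$ factor), is handled soundly via the uniform convergence of $\sum_j j g_j z^{j-1}$ on the closed unit disc.
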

	
	In the appendix, we prove Theorem~\ref{thm:radius}  by directly showing  the  convergence of the Taylor series of the probability generating function $G(z)$ centred at $z=1$. This property can also be obtained by combining Proposition~\ref{prop:1}  with the \emph{linearization theorem} of Koenigs \cite{koenig}; see also \cite[Chapter II]{carleson}.
	
	The quantity $r_G$ is related to the basin of attraction of the attractor $1$ for the fixed point iteration $z_{k+1}=P(z_k)$ (see Section \ref{sec:prob}). 
	The latter in turn is  linked to the rightmost real solution of $z=P(z)$. 
	Observe that the offspring generating function $P(z)$ and all its derivatives are real and positive on the interval $[0,r_P)$, where $r_P$ denotes the radius of convergence of $P(z)$. In particular, the equation $z=P(z)$ can have either one or two solutions on the positive real semi-axis: $1$ and possibly $\widehat z>1$. We have that $z=P(z)$ only for $z=1$ in the case $P(z)\equiv 1-m + mz$ (the only degree $1$ polynomial that satisfies the assumptions on $P(z)$), and in the case $P(r_P)<r_P$ (see the right part of Figure~\ref{fig:P(z)}).
	Hence, we define 
	\begin{equation}\label{psiP}
	\psi_P:=\begin{cases}
	\infty & \text{if } P(z)\equiv 1-m + mz\\
	r_P& \text{if } P(r_P)<r_P\\
	\widehat z>1: \widehat z=P(\widehat z) &\text{otherwise,}
	\end{cases}
	\end{equation}
	so that $P(z)<z$ $\forall z\in(1,\psi_P)$.
	
	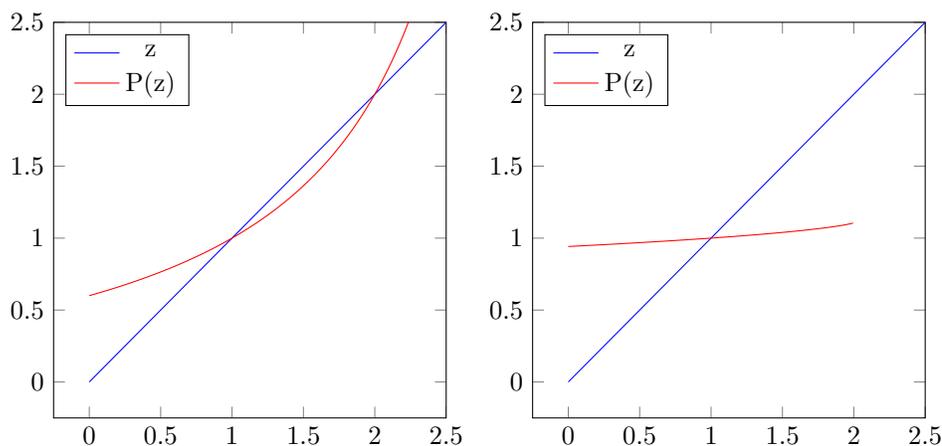
\begin{figure}
		\centering
		\begin{tikzpicture}
		\begin{axis}[width=.45\linewidth, height=.35\textheight,
		legend pos = north west, xmax=2.5,ymax=2.5]
		\addlegendimage{no markers,blue}
		\addlegendimage{no markers,red}
		\addplot table[x index=0, y index=1, mark = none]  {P_z_.dat};
		\addplot table[x index=0, y index=2, mark = none]  {P_z_.dat};
		
		\legend{z,P(z)};
		\end{axis}	
		\end{tikzpicture}
		\begin{tikzpicture}
		\begin{axis}[width=.45\linewidth, height=.35\textheight,
		legend pos = north west, xmax=2.5,ymax=2.5]
		\addlegendimage{no markers,blue}
		\addlegendimage{no markers,red}
		\addplot table[x index=0, y index=0, mark = none]  {tP_z_.dat};
		\addplot table[x index=0, y index=1, mark = none]  {tP_z_.dat};
		
		\legend{z,P(z)};
		\end{axis}	
		\end{tikzpicture}\caption{Intersections of $P(z)$ with the bisector of the first quadrant. On the left, $P(z):=0.6+0.4 \,(0.7 z)({1-0.3 z})^{-1}$. On the right, $P(z):=1+0.1\,(\widetilde P(z)-\widetilde P(1))$ with $\widetilde P(z):=\sum_{j\geq 1}\frac{z^j}{2^jj^2}$. In both cases $\psi_P =2$.}\label{fig:P(z)}		
	\end{figure}
	
	\begin{corollary}\label{cor:radius}
		Assume that  $P(z)$  has radius of convergence $r_P>1$. Then the following statements hold:
		\begin{itemize}
			\item[$(i)$] $ r_G\geq\psi_P$ where $\psi_P$ is defined in \eqref{psiP},
			\item[$(ii)$] $G(P(z)) = m\cdot G(z)+1-m,$ $\forall z\in \mathcal D(0,r_G)$.
		\end{itemize}
	\end{corollary}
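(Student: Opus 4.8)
The plan is to derive both parts from a single observation: the modified Schr\"oder equation \eqref{eq:G}, which holds on $[0,1]$, continues analytically as far as the iteration $z\mapsto P(z)$ keeps points inside a disc on which $G$ is already holomorphic, and $\psi_P$ is exactly the radius past which this fails. Throughout I use that $P$ has non-negative Taylor coefficients (so $|P(z)|\le P(|z|)$ for $|z|<r_P$), that $P(1)=1$, and that $r_G>1$ by Theorem~\ref{thm:radius}. For $(i)$ I would argue by contradiction: suppose $r_G<\psi_P$. Then $1<r_G<\psi_P\le r_P$, so $P$ is holomorphic on a neighbourhood of the closed disc $\mathcal D(0,r_G)$, and $P(r_G)<r_G$ by the defining property of $\psi_P$ in \eqref{psiP}. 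Hence, for $|z|\le r_G$, $|P(z)|\le P(|z|)\le P(r_G)<r_G$, so the open set $V:=\{z\in\mathcal B(0,r_P):\,|P(z)|<r_G\}$ is a neighbourhood of $\mathcal D(0,r_G)$, containing some $\mathcal B(0,r_G+\varepsilon)$, and $z\mapsto G(P(z))$ is holomorphic on $V$. Since $z\mapsto G(P(z))$ and $z\mapsto mG(z)+1-m$ are holomorphic on the connected set $\mathcal B(0,r_G)$ and agree on $[0,1]$, the identity theorem gives $G(P(z))=mG(z)+1-m$ on $\mathcal B(0,r_G)$; rearranging, $G(z)=\tfrac1m\big(G(P(z))-1+m\big)$ there. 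But the right-hand side is holomorphic on $V\supseteq\mathcal B(0,r_G+\varepsilon)$, so $G$ continues holomorphically past $\mathcal B(0,r_G)$ and the radius of convergence of $\sum_{j\ge1}g_jz^j$ exceeds $r_G$ --- a contradiction. Hence $r_G\ge\psi_P$.

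For $(ii)$ I would first upgrade $(i)$ to the equality $r_G=\psi_P$, which is what makes $G(P(z))$ meaningful for all $z\in\mathcal D(0,r_G)$. If $\psi_P=\infty$, then $P$ is affine, $G(z)=z$ solves \eqref{eq:G} on $\mathbb C$, and there is nothing to prove. If $\psi_P<\infty$ and $G$ were holomorphic at $\psi_P$, there are two cases. If $\psi_P<r_P$, then $\widehat z:=\psi_P$ is a fixed point of $P$ with $P'(\widehat z)>1$ (by convexity of $P$ and $P'(1)=m<1$); differentiating \eqref{eq:G} once gives $G'(\widehat z)(P'(\widehat z)-m)=0$, and an induction --- each further differentiation introduces only lower-order derivatives $G^{(j)}(\widehat z)$, $j<k$, already known to vanish --- gives $G^{(k)}(\widehat z)(P'(\widehat z)^k-m)=0$, so $G^{(k)}(\widehat z)=0$ for all $k\ge1$ and $G$ is constant, contradicting $G(0)=0\ne1=G(1)$. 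If $\psi_P=r_P$, then $r_P$ is a singular point of $P$ by Pringsheim's theorem, yet $G'>0$ on $(0,r_G)$ makes $G$ locally biholomorphic near $P(r_P)$, so solving \eqref{eq:G} for $P$ near $r_P$ as $P=G^{-1}(mG+1-m)$ would make $P$ holomorphic at $r_P$ --- again a contradiction. Hence $r_G=\psi_P$. Then for $z\in\mathcal D(0,r_G)$ one has $|P(z)|\le P(|z|)\le P(\psi_P)\le\psi_P=r_G$, so $P$ maps $\mathcal D(0,r_G)$ into itself; thus $z\mapsto G(P(z))$ is holomorphic on $\mathcal B(0,r_G)$, the identity-theorem argument above (now run on $\mathcal B(0,r_G)$) gives $G(P(z))=mG(z)+1-m$ there, and continuity extends it to $\mathcal D(0,r_G)$, which is $(ii)$.

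The step I expect to carry the real weight is the reverse inequality $r_G\le\psi_P$ used for $(ii)$: the modulus estimate, the identity theorem, and the rearrangement of \eqref{eq:G} are all routine, but ruling out any analytic continuation of $G$ beyond $\psi_P$ genuinely needs the dynamical role of $\psi_P$ --- that it is a repelling fixed point of $P$, or coincides with a singularity of $P$. (Alternatively, $(i)$ follows from Koenigs' linearization theorem: $G-1=G'(1)\,\kappa$, where $\kappa$ is the Koenigs linearizer of $P$ at the attracting fixed point $1$, and $\kappa$ --- hence $G$ --- extends holomorphically to the basin of attraction of $1$, which contains $\mathcal B(0,\psi_P)$ since $|z|<\psi_P$ forces $P^{\circ k}(z)\to1$.)
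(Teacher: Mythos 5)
Your argument for part $(i)$ is essentially the paper's own: assume $1<r_G<\psi_P$, use $P(r_G)<r_G$ and $|P(z)|\le P(|z|)$ to see that $z\mapsto \tfrac1m\bigl(G(P(z))-1+m\bigr)$ is holomorphic on a disc strictly larger than $\mathcal B(0,r_G)$, and conclude that $G$ would continue past its radius of convergence --- a contradiction. (The paper phrases the continuation first along the real interval $[r_G,y)$ and then over the disc, but the mechanism is identical, and your use of the identity theorem to first propagate \eqref{eq:G} from $[0,1]$ to $\mathcal B(0,r_G)$ is a slightly cleaner write-up of the same step.) Where you genuinely diverge is part $(ii)$: the paper disposes of it with ``follows by continuity'', whereas you first prove the sharper statement $r_G=\psi_P$ --- via the Fa\`a di Bruno/repelling-fixed-point argument when $\psi_P=\widehat z<r_P$ (all $G^{(k)}(\widehat z)$ vanish because $P'(\widehat z)^k>1>m$), and via Pringsheim plus local invertibility of $G$ when $\psi_P=r_P$ --- and only then observe that $P$ maps $\mathcal D(0,r_G)$ into itself, so that $G\circ P$ is actually well defined on the disc where the identity is asserted. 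This is a real addition: the paper never claims $r_G=\psi_P$ (Remark~\ref{rem:decay} only records $r_P\ge r_G\ge\psi_P$), yet without it, or at least without $P(r_G)\le r_G$, the composition $G(P(z))$ in $(ii)$ is not obviously meaningful on all of $\mathcal D(0,r_G)$; your detour closes that gap, and your fixed-point computation is moreover in the spirit of the paper's own appendix identity \eqref{eq:derivative}. Both your proof and the paper's share the same residual looseness at boundary points: e.g.\ when $\psi_P=\widehat z$ is a fixed point, the series for $G$ diverges at $z=r_G$ (the equation at that point forces $G(r_G)=1$ if finite, which is impossible), so the identity on the \emph{closed} disc has to be read in a limiting sense there; neither ``by continuity'' in the paper nor your final sentence resolves this, but it is an issue with the statement rather than with your argument. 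Minor points to tidy if you write this up: in the affine case uniqueness of the pgf solution (cited from \cite{heathcote}) is what lets you assert $G(z)=z$, and $P'(\widehat z)\ge 1$ (which already suffices, since then $P'(\widehat z)^k\ge 1>m$) follows from the mean value theorem between $1$ and $\widehat z$ together with strict convexity, which holds because $p_0+p_1<1$.
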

	\begin{proof}
		By Theorem~\ref{thm:radius} we have $r_G>1$. To show $(i)$ we assume by contradiction that $1<r_G<\psi_P$, and consider the set $P^{-1}([1,r_G)):=\{z\in\mathbb R:\ P(z)\in[1,r_G)\}$; $P(z)< z$ on $(1,\psi_P)$ implies that $P^{-1}([1,r_G))=[1, y)$ with $y>r_G$. Rewriting \eqref{eq:G},  we can set $G(z)=(G(P(z))-1+m)/m$ for every $z\in[r_G,y)$, extending analytically the function on $[0,y)$. Since $|P(z)|\leq P(|z|)<|z|$ for $1<|z|<y$,  $G(z)$ can be extended analytically on the disc of radius $y$, leading to a contradiction. 
		
		The claim in $(ii)$ follows by continuity.
	\end{proof}
	\begin{remark}\label{rem:decay}
		The results in this section guarantee that we always have $r_P\geq r_G\geq \psi_P>1$. In particular, this provides the upper bound $\mathcal O(\psi_P^{-j})$ for the asymptotic behavior of $g_j$.
	\end{remark}	
	
	%
	\begin{remark}Observe that, in the case of the linear fractional branching process, $r_P=\dfrac{1}{p}>r_G=\dfrac{p_0}{p}>1,$ and that, since $P(p_0/p)={p_0}/{p}$, $r_G=\psi_P$; this is in accordance with Corollary~\ref{cor:radius}.
	\end{remark}
	
	

\section{Methods for computing $G(z)$}\label{sec:alg}
In this section, we first review a method known in the literature to compute the quasi-stationary distribution of a general transient Markov chain; we then discuss another natural approach, based on probabilistic arguments, which suffers from some numerical drawbacks; finally we present our new algorithm.
\subsection{The returned process approach}\label{sec:returnmap}

This approach can be used to evaluate the quasi-stationary distribution $\vc g$ of a transient Markov chain $\{X_n\}_{n\geq 0}$ on $\mathbb{N}$, with the absorbing state $0$ assumed to be reached in finite time with probability one, regardless the initial state.  It relies on the idea that $\vc g$ can be approximated by the \emph{stationary distribution} $\vc\pi^\mu$ of a positive recurrent \emph{returned process} $\{X_n^\mu\}$, which is a Markov chain on the state space $\mathbb{N}_0$ that evolves exactly like the original process $\{X_n\}$ except at the times at which state 0 is visited, when it is instantly returned to a random positive state, chosen
according to a probability distribution $\vc\mu=(\mu_j)$ on $\mathbb{N}_0$. More precisely, if $T$ and $T^\mu$ denote the probability transition matrices of $\{X_n\}$ and $\{X_n^\mu\}$ respectively, then $T^\mu$ is defined as $T^\mu_{ij}:=T_{ij}+T_{i0}\,\mu_j$ for all $i,j\geq 1$, and $\vc g\approx \vc\pi^\mu$ where $\vc\pi^\mu$ solves $\vc\pi^\mu \,T^\mu=\vc\pi^\mu$, $\vc\pi^\mu\,\vc 1=1$;   for more detail, see for instance \cite{barbour2012total,blanchet2013empirical,van2013quasi}. 

The function $\vc\mu\rightarrow\vc\pi^\mu$ is contractive, and the quasi-stationary distribution  $\vc g$ satisfies $\vc g=\vc\pi^g$. Therefore, in practice, instead of sampling from a fixed distribution $\vc\mu$ every time the process visits state 0, we sample the return state from the empirical distribution of the returned process up to that time; after a large enough time, the empirical distribution of the returned process will then be a good approximation of the quasi-stationary distribution.
In summary, the returned process approach is a simulation-based method that works as follows:
\begin{itemize}\item[\emph{(i)}] start the Markov chain $\{X_n\}$ in a non-absorbing state $i\geq 1$;
	\item[\emph{(ii)}] simulate a sequence of states visited by $\{X_n\}$ normally as long as it does not reach state 0;
	\item[\emph{(iii)}] if the Markov chain hits state 0, do not record that transition and re-sample a new state $j\geq1$ at random according to
	the empirical estimate of the quasi-stationary distribution up
	until that time (that is, according to the proportion of time the chain  has spent in each transient state since the start of the simulation), then go back to step \emph{(ii)};
	\item[\emph{(iv)}] after a large enough time, the samples of states will be drawn approximately
	from the quasi-stationary distribution $\vc g$.
\end{itemize}

In our setting where $\{X_n\}$ corresponds to a GW  process, the simulation of $\{X_n\}$ requires the offspring of each individual to be simulated at each generation, which can be computationally demanding. In addition, a large number of generations generally need to be simulated in order to obtain a satisfactory approximation. This method is illustrated in Section~\ref{sec:tests}.

\subsection{A probabilistic interpolation approach}\label{sec:prob}
Here we discuss another method for computing $G(z)$ that has a probabilistic inspiration. This technique exploits Equation~\eqref{eq:G} in combination with the sequence $\{\widetilde z_k\}_{k\geq 0}$ recursively defined as
$$\widetilde z_{k+1}:=P(\widetilde z_k),$$  
with $\widetilde z_0=0$. This  leads to the recursion
$
G(\widetilde z_{k+1}) = m\cdot G(\widetilde z_k)+1-m,$ which, because $G(0)=0$, can be solved explicitly:
\begin{equation}\label{eq:Gzksol}G(\widetilde z_k)=1-m^k, \quad k\geq 0.\end{equation}
The sequence $\{\widetilde z_k\}_{k\in\mathbb N}$
has a probabilistic interpretation: $\widetilde z_k$ is the probability that the GW process becomes extinct by generation $k$, if it starts with a single individual in generation 0. 

In view of \eqref{eq:Gzksol}, $G(\widetilde z_k)$ also has a probabilistic meaning: $G(\widetilde z_k)=\sum_{j\geq 1}g_j\, \widetilde z_k^j$ is the probability that a subcritical GW process observed in its quasi-stationary regime dies within the next $k$ generations. So $m^k$ is the probability that, if we observe a subcritical GW process which has been living for a long time, it is still going to survive for at least $k$ generations. The mean offspring of a subcritical GW process can therefore be interpreted as the probability that the process survives one generation when it is in its quasi-stationary regime. A similar property is given in \mbox{\cite[Proposition 2]{meleard2012quasi}.}

From an algebraic perspective, we have at our disposal a sequence of nodes $\widetilde z_k\in[0,1]$ with $\widetilde z_0=0, \widetilde z_1=p_0, \widetilde z_2=P(p_0),\ldots$, for interpolating the function $G(z)$. However, there are two main issues: first, the set of nodes accumulates near the point $1$ and does not become dense in the interval $[0,1]$. Second, since we are interested in the coefficients $g_j$ we are forced to interpolate with respect to the monomial basis. This requires to solve linear systems (or linear least squares problem) with the Vandermonde matrix generated by the real nodes $\widetilde z_k$. It is well known that the latter is exponentially ill-conditioned with respect to the degree of the interpolant \cite{beck-cond,gautschi}. The performance of this approach is tested in Section~\ref{sec:tests}.
\subsection{A new algorithm}\label{sec:ev-int}
In this section we propose an algorithm for approximating the unknown quasi-stationary distribution $\vc g$ characterized by $G(z)$ when the offspring distribution $P(z)$ has a radius of convergence $r_P>1$. 

The method described in Section~\ref{sec:prob} suffers from the bad quality of the set of nodes used for interpolation. Here, we propose an alternative strategy that performs an approximate interpolation of $G(z)$ on the roots of unity, which is the most suited set for interpolating with respect to the monomial basis.  

We remark that the original functional equation $G(P(z))=mG(z)+1-m$ admits an infinite number of solutions of the form 
$G(z)= 1 + t\cdot f(z),$ where $t\in\mathbb C$ is an arbitrary constant, and $f(z)$ satisfies \begin{equation}\label{eq:f}f(P(z))-m\cdot f(z)\equiv 0.\end{equation} Once $f(z)$ is known, $G(z)$ can be obtained by imposing the boundary condition $G(0)=0$. 
Our strategy for computing $f(z)$ consists in discretizing the operator $f\rightarrow f\circ P -mf$ and looking for an eigenvector associated with its smallest eigenvalue. 

Observe that,  given $r\in(1,\psi_P)$, $\forall z\in r\cdot \mathcal S^1$ we have $|P(z)|<r$.
Therefore, for $z\in r\cdot \mathcal S^1$ we can use the  Cauchy integral formula and rewrite \eqref{eq:f} as
\begin{equation}\label{eq:f2}
\frac 1{2\pi \mathbf i}\int_{r\cdot \mathcal S^1}f(t)(t-P(z))^{-1}dt - m\cdot f(z)=0.
\end{equation}
Then, we replace the left-hand side of \eqref{eq:f2} with its  approximation obtained via the trapezoidal rule, choosing the scaled $n$-th roots of unity as nodes for integration. Even though other quadrature rules, e.g. Gauss-Legendre,  have a higher order of convergence in general, for the particular case of the integration on a disk, the trapezoidal rule has exponential convergence as $n$ increases \cite{trefethen}. This yields the integration scheme :
\begin{equation}\label{eq:approx}
\frac 1{2\pi \mathbf i}\int_{r\cdot \mathcal S^1}f(t)(t-P(z))^{-1}dt-mf(z) \approx\sum_{j=1}^{n} f(r\xi_j)\cdot \frac{r\xi_j}{n}\cdot (r\xi_j- P(z))^{-1}-mf(z), 
\end{equation}
where $\xi_j:=\exp({2\pi j\mathbf i}/{n})$, $j=1,2,\dots,n$.
Evaluating the right-hand side of \eqref{eq:approx} in the scaled $n$-th roots of unity provides the system of $n$ equations in the $n$ unknowns $f(r\xi_j)$:
\begin{equation}\label{eq:lin-sys}\sum\limits_{h=1}^{n} f(r\xi_h)\cdot \frac{r\xi_h}{n}\cdot (r\xi_h- P(r\xi_j))^{-1}-m\cdot f(r\xi_j) \approx 0\quad j=1,\dots, n.
\end{equation}
Rewriting \eqref{eq:lin-sys} in matrix form leads to the smallest-eigenpair problem:
\begin{equation}\label{eq:G3}
A \mathbf{v_f}= \lambda_{min}\mathbf{v_f},\qquad A=(a_{jh})_{j,h=1,\dots,n},\qquad a_{jh}= 
\begin{cases}
\frac{r\xi_h}{n(r\xi_h-P(r\xi_j))} &h\neq j\\
\frac{r\xi_h}{n(r\xi_h-P(r\xi_h))}-m &h=j.
\end{cases}
\end{equation}
Indeed, when $\lambda_{min}$ is the eigenvalue of smallest modulus of the matrix $A$, the vector $\mathbf{v_f}$ contains  approximations of the quantities  $\widetilde f(\xi_j):= f(r\cdot \xi_j)$, $j=1,\dots,n$, for a function $f$ that verifies \eqref{eq:f}.
Then, applying the \emph{Inverse Fast Fourier Transform} (IFFT) to  $\mathbf{v_f}$ provides the vector containing the (approximate) coefficients of the interpolating polynomial $\sum_{j=0}^{n-1}\widetilde f_j z^j$ for $\widetilde f(z)$ at the nodes $\xi_h$, for $h=1,\dots,n$.  In order to retrieve the (approximate) interpolating polynomial for $f(z)$ we rescale  the coefficients with the rule $f_j\gets \widetilde f_j/r^j$. Finally, we impose the boundary condition  $0=G(0)=1+tf(0)=1+tf_0$, which implies $t=-{1}/{f_0}$. This yields the following (approximate) interpolating polynomial $\widehat G(z)$ for $G(z)$:
\[
\widehat G(z):=\sum_{j=1}^{n-1}\widehat g_jz^j,\qquad \widehat g_j= -\frac{f_j}{f_0}.
\]
The procedure is summarized in Algorithm~\ref{alg:ev-int}; \textsc{Eigs}($A$) denotes any numerical method for computing the eigenvector of $A$ associated with its smallest eigenvalue. 

The construction of $A$ and \textsc{Eigs}($A$) constitute the bottlenecks of the algorithm. In particular, memorizing the full matrix and running \textsc{Eigs}($A$) in dense arithmetic ---  for example using the MATLAB command  \texttt{eigs(A, 1, 'SM')} --- provides a quadratic cost in storage and a cubic time consumption, respectively. In this setting, we have to consider $n$ less than $10^4$ in order to carry on the computations on a standard laptop. In Section~\ref{sec:rank}, we will show that it is possible to exploit the structure of the matrix $A$ for achieving a cheaper storage and an efficient implementation of \textsc{Eigs}, allowing us to consider higher values for $n$.
\begin{algorithm}
	\caption{Evaluation-Interpolation}\label{alg:ev-int}
	\begin{algorithmic}[1]
		\Procedure{Compute\_G}{$P(z), n, r$}\Comment{$r>P(r)>1$}
		\State $m \gets P^{(1)}(1)$
		\State $\mathbf{\xi}\gets\left(r\cdot e^{\frac{2\pi \mathbf i j}{n}}\right)_{j=1,\dots, n}$
		\State $A\gets\left(\frac{\xi_h}{n(\xi_h-P(\xi_j))}\right)_{j,h=1,\dots, n}$
		\State $A\gets A-m\cdot I_n$
		\State $\mathbf v_f\gets$ \Call{Eigs}{$A$}
		\State $\mathbf f\gets$ \Call{IFFT}{$\mathbf v_f$},\quad$\mathbf f\gets \left( \frac{f_j}{r^{j}}\right)_{j=0,\dots,n-1}$\Comment{$\sum_{j=0}^{n-1}f_jz^{j}$ interpolates $\widetilde f(z)$}
		\State $ \mathbf{\widehat g}\gets -\frac{1}{f_0}\mathbf f$,\quad $\widehat g_0\gets0$ 
		\State \Return $ \mathbf{\widehat g}$
		\EndProcedure
	\end{algorithmic}
\end{algorithm}
\subsection{Numerical tests}\label{sec:tests}
All the experiments reported in this paper have been performed on a Laptop with the dual-core Intel Core i7-7500U 2.70 GHz CPU, 256KB of level 2 cache, and 16 GB of RAM. The
algorithms are implemented in MATLAB and tested under MATLAB2017a, 
with MKL BLAS version 11.2.3 utilizing both cores.
\begin{example}
	As a first example, we consider  the linear-fractional GW process  with parameters $p_0=0.6$ and $p=0.3$. In this case, we know that the quasi-stationary distribution is given by $g_j=2^{-(j+1)}$, $j\geq 1$, see \eqref{eq:lin-frac}. This example allows us to evaluate the quality of the approximations resulting from the three algorithms that we have introduced. 
	
	In Figures~\ref{fig:graph} and \ref{fig:fit} we denote with the label ``Return map'' the procedure described in Section~\ref{sec:returnmap}, which we ran for $10^6$ generations.  
	With ``Interpolation'' we indicate the probabilistic interpolation approach of Section~\ref{sec:prob}, that generates the data set $\{(\widetilde z_k, 1-m^k)\}_{k=0,\dots,199}$ and  computes the corresponding fitting polynomial of degree $12$ using the \texttt{polyfit} function of MATLAB. Clearly, this yields estimates only for $g_j$ with $j=1,\dots,12$; however,  experimentally  we  notice that using a higher degree for the fitting polynomial provides noisy results. Moreover, adding points to the data set brings no benefits due to the convergence of the sequence $\widetilde z_k$ to its limit point. Finally, we run Algorithm~\ref{alg:ev-int} using $n=512$ integration nodes. 
	
	In Figure~\ref{fig:graph}-left we plot the approximated solutions $\widehat G(z)$ returned by the three methods over the unit interval. Since the three graphs are indistinguishable on this scale, in Figure~\ref{fig:graph}-right we zoom over the interval $[0.02,0.03]$ where we finally observe some differences. 
	In Figure~\ref{fig:fit}-left we report the approximated coefficients $\widehat g_j$ returned by the three methods and the true $g_j$'s. We let the index $j$ vary in the range $[0, 52]$ because, for $j\geq 53$, $g_j$ is below the machine precision. It is evident that  the outcome of the interpolation method strongly differs from the ones of the other algorithms and from the true solution; in addition the coefficients of the fitting polynomial are sometimes negative. In Figure~\ref{fig:fit}-right we report the relative errors $|(g_j-\widehat g_j)/g_j|$ of the three approaches; the results indicate  that the accuracy of Algorithm~\ref{alg:ev-int} largely exceeds that of the others, as it returns relative errors of magnitudes close to the  machine precision already for $512$ integration nodes. We also remark that the return map method with $10^6$ generations does not manage to provide non-zero estimates for the coefficients $g_j$ with $j>20$. Finally, we mention that the execution time of Return map was about $40$ seconds while Algorithm~\ref{alg:ev-int} and Interpolation needed less than $0.1$ seconds.
\end{example}

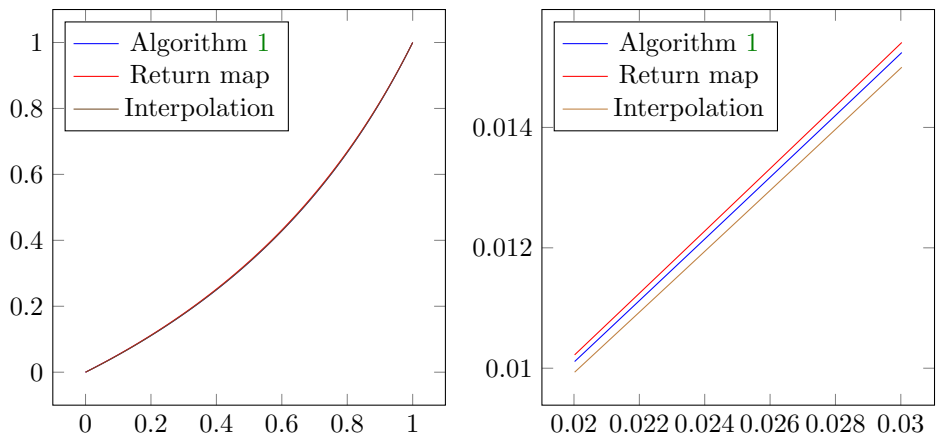
\begin{figure}\label{fig:graph}
	\begin{tikzpicture}
	\begin{axis}[width=.45\linewidth, height=.35\textheight,
	legend pos = north west, ymax=1.1, ymin=-0.1]
	\addplot+[solid, no markers] table[x index=0, y index=1, mark = none]  {comparison_graph.dat};
	\addplot+[solid, no markers] table[x index=0, y index=2, mark = none]  {comparison_graph.dat};
	\addplot+[solid, no markers] table[x index=0, y index=3, mark = none]  {comparison_graph.dat};
	\legend{Algorithm~\ref{alg:ev-int},Return map, Interpolation};
	\end{axis}
	\end{tikzpicture}~
	\begin{tikzpicture}
	\begin{axis}[width=.45\linewidth, height=.35\textheight,
	legend pos = north west, scaled y ticks = false, scaled x ticks = false, yticklabel style={
		/pgf/number format/precision=3,
		/pgf/number format/fixed}, xticklabel style={
		/pgf/number format/precision=3,
		/pgf/number format/fixed}]
	\addplot+[select coords between index={20}{30}, color=blue, solid, no markers] table[x index=0, y index=1] {comparison_graph.dat};
	\addplot+[select coords between index={20}{30}, color=red, solid, no markers] table[x index=0, y index=2]{comparison_graph.dat};
	\addplot+[select coords between index={20}{30}, color=brown, solid, no markers] table[x index=0, y index=3] {comparison_graph.dat};
	\legend{Algorithm~\ref{alg:ev-int},Return map, Interpolation};
	\end{axis}	
	\end{tikzpicture}\caption{On the left; plots of the approximated solutions $\widehat G(z)$ returned by the three methods for the linear fraction branching process with $p_0=0.6$ and $p=0.3$. On the right; zoom of the picture on the left for $z\in[0.02,0.03]$.}
\end{figure}

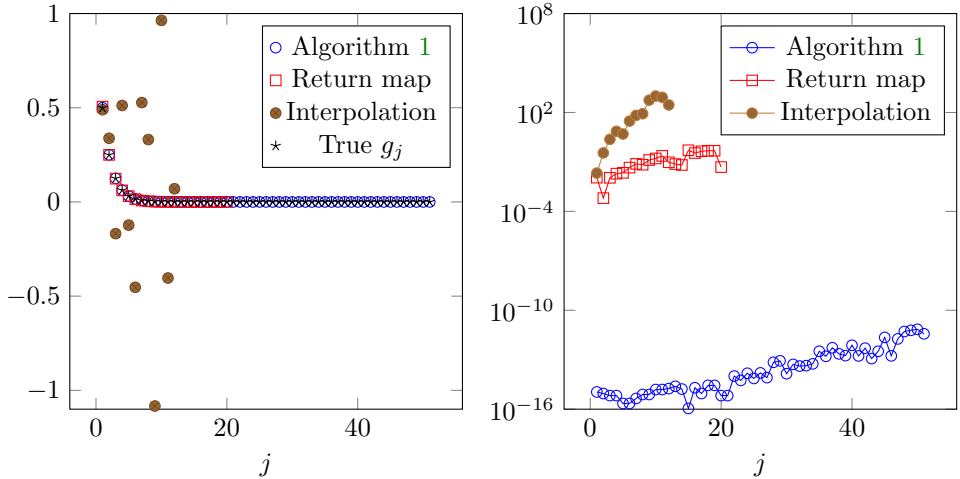
\begin{figure}\label{fig:fit}
	\begin{tikzpicture}
	\begin{axis}[width=.45\linewidth, height=.35\textheight,
	legend pos = north east, ymax=1, ymin=-1.1,xlabel = $j$]
	\addplot+[only marks, mark = o] table[x index=0, y index=1, mark = none]  {comparison_coef.dat};
	\addplot+[select coords between index={0}{19}, only marks, mark = square] table[x index=0, y index=2, mark = none]  {comparison_coef.dat};
	\addplot+[select coords between index={0}{11}, only marks] table[x index=0, y index=3, mark = none]  {comparison_coef.dat};
	\addplot+[only marks] table[x index=0, y index=4]  {comparison_coef.dat};
	\legend{Algorithm~\ref{alg:ev-int},Return map, Interpolation,True $g_j$};
	\end{axis}
	\end{tikzpicture}~
	\begin{tikzpicture}
	\begin{semilogyaxis}[width=.45\linewidth, height=.35\textheight,
	legend pos = north east,ymin=1e-16,ymax=1e8,xlabel = $j$]
	\addplot+[mark = o] table[x index=0, y index=1] {comparison_err.dat};
	\addplot+[select coords between index={0}{19}, color=red, solid, mark = square] table[x index=0, y index=2]{comparison_err.dat};
	\addplot+[select coords between index={0}{11}, color=brown, solid, mark = *] table[x index=0, y index=3] {comparison_err.dat};
	\legend{Algorithm~\ref{alg:ev-int},Return map, Interpolation};
	\end{semilogyaxis}	
	\end{tikzpicture}\caption{On the left: approximated coefficients $\widehat g_j$ computed by the three methods and true coefficients $g_j$ of the linear fractional branching process with $p_0=0.6$ and $p=0.3$. On the right: relative errors $\left|\frac{g_j-\widehat g_j}{g_j}\right|$ of the three approaches.}
\end{figure}

\begin{example}\label{ex:1}
	We test Algorithm~\ref{alg:ev-int} on a randomly generated offspring distribution. More specifically, we set $P(z)$ equal to the polynomial of degree $8$ with the following coefficients: $p_0=0.838$, $p_1 = 0.008$, $p_2=0.031$, $p_3=0.011$, $p_4=0.021$, $p_5=0.029$, $p_6= 0.019$, $p_7=0.014$ and $p_8=0.029$. Consequently, we have $m=0.776$ and $\psi_P\approx 1.101$. The latter is estimated numerically as the rightmost solution of $z=P(z)$. The parameter $r$ is set equal to $\arg\min_{x\geq 1}P(x)-x$, which is obtained via the \texttt{fminsearch} function of MATLAB.
	This is because  we want to keep the magnitude of the quantities  $(P(r\xi_j)-r\xi_j)^{-1}$ (that are involved in the definition of the matrix $A$) under control.
	
	In Figure~\ref{fig:e2} (top-left), we show the performances of Algorithm~\ref{alg:ev-int} and the features of the computed solution. In particular, we report the residual defined as \[\text{Res}:=\max_{j=1,\dots,n}|\widehat G(P(\xi_j))-m\widehat G(\xi_j)-1+m|,\]
	and the sum of the coefficients $\widehat g_j$. We note that, in all our tests, the $\widehat g_j$'s are real and non-negative up to machine precision. As $n$ increases, the execution times scale cubically; the residual decreases rapidly to $0$ and the sum of the $\widehat g_j$'s converges to $1$. In Figure~\ref{fig:e2} (top-right), we plot the first $300$ coefficients $\widehat g_j$, computed in the case $n=8192$, and we compare their distribution with the decay rate $\psi_P^{-j}$, suggested by Corollary~\ref{cor:radius}. The outcome confirms the sharpness of the decay rate. 
\end{example}
\begin{example}\label{ex:2}
	We consider a test analogous to the one in Example~\ref{ex:1}, but with a mean offspring $m$ closer to $1$. We set the coefficients of $P(z)$ as $p_0=0.782$, $p_1 = 0.016$, $p_2=0.045$, $p_3=0.038$, $p_4=0.037$, $p_5=0.008$, $p_6= 0.009$, $p_7=0.04$ and $p_8=0.025$, which   yields $m=0.942$ and $\psi_P\approx 1.026$. Apart from the case $n=8192$, we note the presence of negative coefficients $\widehat g_j$ whose order of magnitude ranges from $10^{-3}$ (for $n=256$) to $10^{-5}$ (for $n=4096$).  The results reported in the bottom of Figure~\ref{fig:e2} highlight the fact that we are still far from convergence; indeed the residual is much higher  than in Example~\ref{ex:1}, and the sum of the $\widehat g_j$'s is not close to $1$. Finally, the slope of the decay of the coefficients is further from the theoretical estimate.
\end{example}

Example~\ref{ex:2} suggests that for $m$ close to $1$, higher values of $n$ are needed in order to reach a satisfactory accuracy. This leads us to deal with a large scale matrix $A$. In the next sections we propose an improvement of Algorithm~\ref{alg:ev-int} for treating this case.
\begin{figure}
	\centering
	\begin{minipage}{0.45\textwidth}
		\pgfplotstabletypeset[%
		sci zerofill,
		columns={0,1,2,4},
		columns/0/.style={column name=$n$},
		columns/1/.style={column name=Time (s), fixed},
		columns/2/.style={column name=Res},
		columns/4/.style={column name=$\sum \widehat g_j$},
		]{e1.dat}
	\end{minipage}~\begin{minipage}{0.52\textwidth}
		\bigskip 
		
		\begin{tikzpicture}
		\begin{semilogyaxis}[width=.95\linewidth, height=.27\textheight,
		legend pos = north east,
		xlabel = $j$]
		\addplot+[mark=o] table[x index=0, y index=1]  {e1G.dat};   
		\addplot[domain=0:305,dashed] {1.101^(-x) };
		\legend{$\widehat g_j$,$\psi_P^{-j}$};
		\end{semilogyaxis}
		\end{tikzpicture}
	\end{minipage}
	\centering
	\begin{minipage}{0.45\textwidth}
		\pgfplotstabletypeset[%
		sci zerofill,
		columns={0,1,2,4},
		columns/0/.style={column name=$n$},
		columns/1/.style={column name=Time (s), fixed},
		columns/2/.style={column name=Res},
		columns/4/.style={column name=$\sum \widehat g_j$},
		]{e2.dat}
	\end{minipage}~\begin{minipage}{0.52\textwidth}
		\bigskip 
		
		\begin{tikzpicture}
		\begin{semilogyaxis}[width=.95\linewidth, height=.27\textheight,
		legend pos = north east,
		xlabel = $j$, ymax=1e3]
		\addplot+[mark=o] table[x index=0, y index=1]  {e2G.dat};   
		\addplot[domain=0:305,dashed] {1.026^(-x) };
		\legend{$\widehat g_j$,$\psi_P^{-j}$};
		\end{semilogyaxis}
		\end{tikzpicture}
	\end{minipage}
	\caption{Example~\ref{ex:1} (top) and Example~\ref{ex:2} (bottom). On the left, performances of Algorithm~\ref{alg:ev-int} as  $n$ increases. On the right, comparison between the estimated coefficicients of $G(z)$, in the case $n=8192$, and the decay suggested by Corollary~\ref{cor:radius}.}
	\label{fig:e2}
\end{figure}
\subsection{Rank structure in the matrix $A$}\label{sec:rank}
We now take a closer look at the matrix $A$ in \eqref{eq:G3}. This matrix can be written as
\[
A = C_{P,r}^{(n)} \cdot\diag\left(\frac{r\xi_1}{n},\dots,\frac{r\xi_n}{n} \right)- mI_n,
\]
where the $n\times n$ matrix $C_{P,r}^{(n)}=(c_{hj})$ is defined as $c_{hj}:=(r\xi_h-P(r\xi_j))^{-1}$ with $\xi_h=\exp({2\pi\mathbf i h}/{n})$. 
The aim of this subsection is to show that the matrix $A$  is well-approximated by a multiple of the identity matrix plus a low-rank matrix. In view of the Eckart-Young theorem \cite[Chapter 3]{horn91}, this is equivalent to showing that the singular values $\sigma_k$ of $C_{P,r}^{(n)}\cdot \diag\left(\frac{r\xi_1}{n},\dots,\frac{r\xi_n}{n} \right)$ rapidly become negligible, with respect to $\sigma_1$, as $k$ increases. 
We note that the analysis is reduced to studying the $n\times n$ matrix $C_{P,r}^{(n)}$, because the multiplication by the diagonal matrix  does not alter the ratio $\sigma_k/\sigma_1$.

The matrix $C_{P,r}^{(n)}$ belongs to a well-studied class of structured matrices that we introduce within the next definition.

\begin{definition}
	A matrix  $(c_{hj})\in\mathbb C^{m\times n}$ is called a \emph{Cauchy matrix} if there exist two vectors $\mathbf x\in\mathbb C^{m}$ and $\mathbf y\in\mathbb C^{n}$ such that $c_{hj}=(x_h-y_j)^{-1}$. We call $\mathbf x,\mathbf y$ the \emph{generators} and we  denote the matrix $(c_{hj})$ by  $C(\mathbf x,\mathbf y)$.
\end{definition} 

The behavior of the singular values of $C(\mathbf x,\mathbf y)$ can be linked to the configurations of the two sets $\mathcal X:=\{x_h\}_{h=1,\dots,m}$ and $\mathcal Y:=\{y_j\}_{j=1,\dots, n}$
in the complex plane. There are many results in the literature on the singular value decay of Cauchy matrices whose corresponding sets  $\mathcal X$ and $\mathcal Y$ are separated in some sense \cite{pan14,rokhlin,chandra}. 

\subsubsection{Bounds linked to polynomial approximation}
The existence of accurate low-rank approximations of $C(\mathbf x,\mathbf y)$ is implied by the existence of low-degree \emph{separable approximations} $\widetilde a(x,y)=\sum_{j=1}^kg_j(x)h_j(y)$ of the function $a(x,y):={(x-y)^{-1}}$, over the set $\mathcal X\times \mathcal Y$, see \cite[Section 4]{hack}. A possible way to determine a separable approximation of $a(x,y)$ consists in considering its truncated Taylor expansion with respect to one of the two variables. Intuitively, for using the Taylor expansion we need the set $\mathcal X\times \mathcal Y$ to be well separated from the singularities of $a(x,y)$. This is encoded in the following definition.

\begin{definition}\label{def:sep}
	Given $\theta \in (0,1)$ and $c\in\mathbb C$ we say that two sets $\mathcal X,\mathcal Y\subset \mathbb C$ are \emph{$(\theta, c)$-separated} if for every $x\in\mathcal X$ and $y\in\mathcal Y$ we have ${|y-c|}\leq \theta {|x-c|}$.	
\end{definition}
The property in Definition~\ref{def:sep} provides an explicit exponential decay in the singular values of $C(\mathbf x,\mathbf y)$, as stated in the next result. 
\begin{theorem}[Chandrasekaran \textit{et al.} \cite{chandra}, Section 2.2]\label{thm:decay1}
	Let $\{x_h\}_{h=1,\dots,m},\{y_j\}_{j=1,\dots,n}\subset \mathbb C$ be $(\theta,c)$-separated for a certain $\theta\in(0,1)$ and a complex center $c$. Then, $\forall k\in\mathbb N$ there exist $U\in\mathbb C^{m\times k}$ and $V\in\mathbb C^{n\times k}$ such that
	\[
	\norm{C(\mathbf x,\mathbf y)-UV^*}_2\leq \frac{\theta^k}{(1-\theta)\delta}\sqrt{mn},
	\]
	where $\delta :=\min_{h=1,\dots,m}|c-x_h|$.
\end{theorem}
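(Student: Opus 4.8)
The plan is to build the rank-$k$ approximant $UV^*$ explicitly from a truncated Taylor expansion of the kernel $a(x,y)=(x-y)^{-1}$ centered at $c$, and then to control the truncation error uniformly over $\mathcal X\times\mathcal Y$ using the $(\theta,c)$-separation hypothesis. First I would write, for $x\neq c$,
\[
\frac1{x-y}=\frac1{(x-c)-(y-c)}=\frac1{x-c}\cdot\frac1{1-\frac{y-c}{x-c}}=\frac1{x-c}\sum_{\ell=0}^{\infty}\left(\frac{y-c}{x-c}\right)^{\ell},
\]
which converges precisely because $|y-c|\le\theta|x-c|$ with $\theta<1$, so the ratio $|(y-c)/(x-c)|\le\theta$. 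Truncating after $k$ terms gives the separable approximation $\widetilde a_k(x,y):=\sum_{\ell=0}^{k-1}(x-c)^{-\ell-1}(y-c)^{\ell}$, and the corresponding matrices are the natural choice: $U=(u_{h\ell})$ with $u_{h\ell}=(x_h-c)^{-\ell}$ for $\ell=1,\dots,k$ (equivalently $(x_h-c)^{-\ell-1}$ after reindexing), and $V=(v_{j\ell})$ with $v_{j\ell}=\overline{(y_j-c)^{\ell-1}}$, so that $(UV^*)_{hj}=\widetilde a_k(x_h,y_j)$ and $\rank(UV^*)\le k$.

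Next I would bound the entrywise error. The tail is a geometric series:
\[
\bigl|a(x_h,y_j)-\widetilde a_k(x_h,y_j)\bigr|
=\left|\frac1{x_h-c}\sum_{\ell=k}^{\infty}\left(\frac{y_j-c}{x_h-c}\right)^{\ell}\right|
\le\frac1{|x_h-c|}\cdot\frac{\theta^k}{1-\theta}
\le\frac{\theta^k}{(1-\theta)\,\delta},
\]
where the last step uses $|x_h-c|\ge\delta:=\min_h|c-x_h|$. Since every entry of $C(\mathbf x,\mathbf y)-UV^*$ is bounded in modulus by $\theta^k/((1-\theta)\delta)$, the Frobenius norm is at most $\sqrt{mn}$ times that bound, and the spectral norm is dominated by the Frobenius norm, giving
\[
\norm{C(\mathbf x,\mathbf y)-UV^*}_2\le\norm{C(\mathbf x,\mathbf y)-UV^*}_F\le\frac{\theta^k}{(1-\theta)\,\delta}\sqrt{mn},
\]
which is exactly the claimed bound.

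There is essentially no serious obstacle here; the argument is a clean geometric-series estimate. The only points that require a little care are: (i) ensuring the separation inequality $|y-c|\le\theta|x-c|$ really does force the ratio inside the geometric series to stay bounded by $\theta<1$ for \emph{all} index pairs simultaneously, so that the tail bound is uniform and the truncation index $k$ is the same for every entry — this is immediate from Definition~\ref{def:sep}; and (ii) remembering to conjugate appropriately in the definition of $V$ so that $UV^*$ (with the conjugate transpose) reproduces $\widetilde a_k$ rather than $\overline{\widetilde a_k}$. The passage from the entrywise bound to the spectral-norm bound via $\norm{\cdot}_2\le\norm{\cdot}_F$ and $\norm{M}_F\le\sqrt{mn}\max_{h,j}|m_{hj}|$ is standard. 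If one wanted a sharper constant one could bound $\norm{\cdot}_2$ directly (e.g.\ via $\norm{M}_2\le\sqrt{\norm{M}_1\norm{M}_\infty}$), but the stated $\sqrt{mn}$ version is the simplest and matches the theorem.
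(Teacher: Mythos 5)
Your proof is correct and follows exactly the approach the paper indicates (and that the cited reference of Chandrasekaran \textit{et al.} uses): a truncated Taylor/geometric-series expansion of $(x-y)^{-1}$ centered at $c$, yielding a $k$-term separable approximant whose entrywise error is bounded by $\theta^k/((1-\theta)\delta)$ via the $(\theta,c)$-separation, followed by the standard passage $\norm{\cdot}_2\leq\norm{\cdot}_F\leq\sqrt{mn}\,\max_{h,j}|\cdot|$. No gaps; the handling of the conjugation in $V$ and the uniformity of the geometric ratio are exactly the points that need care, and you addressed both.
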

\begin{remark}\label{rem:sing}
	By the Eckart-Young theorem, Theorem~\ref{thm:decay1} provides the bound
	\[
	\sigma_{k+1}\leq\frac{\theta^k}{(1-\theta)\delta}\sqrt{mn},\qquad k=1,2,\dots,
	\]
	for the singular values of $C(\mathbf x,\mathbf y)$.
\end{remark}	

In order to apply Theorem~\ref{thm:decay1} in our framework, we need to understand better where the function $P(r\cdot z)$ maps the unit circle.
Relying on the stochasticity properties of the coefficients $p_j$, $\forall z\in \mathcal S^1$ we have
\begin{equation}\label{eq:P(z)}
|P(r\cdot z)-p_0|= \left|\sum_{j=1}^\infty p_j(rz)^j\right|\leq \sum_{j=1}^\infty p_jr^j=P(r)-p_0,
\end{equation}
i.e., $P(r\cdot \mathcal S^1)$ can be enclosed into the circle $p_0+\alpha(r)\mathcal S^1 $, with $\alpha(r):=P(r)-p_0$. This is at the basis of the next lemma.
\begin{lemma}\label{lem:decay1}
	Let $n\in\mathbb N$, $P(z)=\sum_{j\geq 0}p_jz^j$, $p_j\ge 0\ \forall j\ge 0$, $P(1)=1$, $P^{(1)}(1)\in (0,1)$, and let $r>1$ be such that $r>P(r)$. Then $\forall k= 1,\dots, n-1$ we have
	\begin{equation}\label{eq:bound1}
	\sigma_{k+1}(C_{P,r}^{(n)})\leq \frac{\theta^kn}{(1-\theta)(r-p_0)},
	\end{equation}
	where $\theta := \frac{P(r)-p_0}{r-p_0}\in(0,1)$. 
\end{lemma}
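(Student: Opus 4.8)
The plan is to apply Theorem~\ref{thm:decay1} directly to the Cauchy matrix $C_{P,r}^{(n)} = C(\mathbf x, \mathbf y)$ with generators $x_h = r\xi_h$ and $y_j = P(r\xi_j)$, choosing the center $c = p_0$. The key point is that the hypotheses of Theorem~\ref{thm:decay1} reduce to verifying that the two node sets $\mathcal X = \{r\xi_h\}_{h=1,\dots,n}$ and $\mathcal Y = \{P(r\xi_j)\}_{j=1,\dots,n}$ are $(\theta, p_0)$-separated for the value $\theta := (P(r)-p_0)/(r-p_0)$.

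\emph{Step 1: separation of the node sets.} For every $h$, since $|\xi_h| = 1$ we have $|x_h - p_0| = |r\xi_h - p_0|$, and one should observe that $|r\xi_h - p_0| \geq r - p_0$ because $p_0 \in [0,1) < 1 < r$ and the closest point of the circle $r\cdot\mathcal S^1$ to the real point $p_0$ is $r$ itself. On the other side, estimate~\eqref{eq:P(z)} gives $|P(r\xi_j) - p_0| = |\sum_{j\geq 1} p_j (r\xi_j)^j| \leq \sum_{j\geq 1} p_j r^j = P(r) - p_0$. Combining, for all $h,j$,
\[
|y_j - p_0| \leq P(r) - p_0 = \theta\,(r-p_0) \leq \theta\,|x_h - p_0|,
\]
which is precisely the $(\theta, p_0)$-separation condition. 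One must also check $\theta \in (0,1)$: positivity is clear since $P(r) > P(1) = 1 > p_0$ (as $P$ has nonnegative coefficients and is strictly increasing past a constant, using $P^{(1)}(1)\in(0,1)$ so $P$ is not constant), and $\theta < 1$ is exactly the assumption $r > P(r)$ rewritten as $r - p_0 > P(r) - p_0$.

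\emph{Step 2: invoke the theorem and identify $\delta$.} Apply Theorem~\ref{thm:decay1} with $m = n$ and $c = p_0$. The quantity $\delta = \min_{h}|c - x_h| = \min_h |p_0 - r\xi_h| = r - p_0$, again because the minimum over the circle $r\cdot\mathcal S^1$ of the distance to $p_0 \in \mathbb R$, $p_0 < r$, is attained at $z = 1$. Theorem~\ref{thm:decay1} together with Remark~\ref{rem:sing} then yields, for $k = 1, \dots, n-1$,
\[
\sigma_{k+1}(C_{P,r}^{(n)}) \leq \frac{\theta^k}{(1-\theta)\delta}\sqrt{n\cdot n} = \frac{\theta^k n}{(1-\theta)(r - p_0)},
\]
which is exactly~\eqref{eq:bound1}.

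I do not expect any serious obstacle here: the lemma is essentially a specialization of Theorem~\ref{thm:decay1}, and the only genuine content is the geometric observation (used twice) that for a circle of radius $r > 1$ centered at the origin and a real point $p_0 \in [0,1)$, the nearest circle point is $r$ at distance $r - p_0$, so both $\delta$ and the lower bound on $|x_h - p_0|$ equal $r - p_0$. The mildest care needed is the verification $\theta \in (0,1)$, in particular that $P(r) > p_0$, which follows because $P$ is non-constant with nonnegative coefficients summing to $1$ and $r > 1$, hence $P(r) \geq P(1) = 1 > p_0$; the strict inequality $\theta > 0$ holds since $P(r) - p_0 \geq 1 - p_0 > 0$.
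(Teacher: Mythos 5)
Your proof is correct and follows essentially the same route as the paper: identify $C_{P,r}^{(n)}$ as the Cauchy matrix with generators $r\xi_h$ and $P(r\xi_j)$, use the estimate \eqref{eq:P(z)} together with $|r\xi_h-p_0|\geq r-p_0$ to establish $(\theta,p_0)$-separation, and then invoke Theorem~\ref{thm:decay1} with Remark~\ref{rem:sing}, with $\delta=r-p_0$. Your additional checks (that $\theta\in(0,1)$ and the identification of $\delta$) are accurate and merely make explicit what the paper leaves implicit.
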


\begin{proof}
	Let us consider $\mathbf x=\left(r\xi_j\right)_{j=1,\dots,n}$ and $\mathbf y=\left(P(r\xi_j)\right)_{j=1,\dots,n}$, so that $C_{P,r}^{(n)}=C(\mathbf x, \mathbf y)$. In light of \eqref{eq:P(z)}, we have
	\[
	\frac{|y_j-p_0|}{|x_h-p_0|}=\frac{|P(r\xi_j)-p_0|}{|r\xi_j-p_0|}\leq \frac{P(r)-p_0}{r-p_0},
	\]
	that is $\mathcal X$ and $\mathcal Y$ are $\left(\frac{P(r)-p_0}{r-p_0}, p_0\right)$-separated. Then, the claim follows by applying Theorem~\ref{thm:decay1} and Remark~\ref{rem:sing} to $C(\mathbf x, \mathbf y)$.
\end{proof}
\begin{remark}\label{rem:sing2}
	The decay rate $\theta$ in Lemma~\ref{lem:decay1} depends on the parameter $r$. We note that the strategy of minimizing the difference $P(r)-r$ on the interval $(1,\psi_P)$ (when we choose $r$) also minimizes the quantity $\theta$. 	
\end{remark}
\begin{example}\label{ex:bound}
	We proceed to test the quality of the bound \eqref{eq:bound1} by considering  the linear fractional family of progeny distributions with $p=1/2$:
	\[
	P(z)= p_0+(1-p_0)\sum_{j\geq 1}\left(\frac z2\right)^j=p_0 + \frac{z(1-p_0)}{2-z},\qquad p_0\in(0,1).
	\]	
	For these distributions, we have $m=P^{(1)}(1)\in(0,1)$ if and only if $p_0\in\left(1/2, 1\right)$. In particular, when $p_0$ is close to $1/2$,  $m$ is close to $1$ and the interval $(1,\psi_P)$ 	shrinks drastically. This yields a ratio $(P(r)-p_0)/(r-p_0)$ close to $1$ and consequently a slow decay. The opposite behavior is obtained when $p_0$ tends to $1$. 
	
	In the left panels of Figures~\ref{fig:bound2}  we report the singular values of the matrix $C_{P,r}^{(n)}$ together with the bound  \eqref{eq:bound1}, for $n=1000$, in the cases  $p_0=0.55$ and $p_0=0.95$. In the right panels of these figures we plot the three curves: $r\mathcal S^1$, $P(r\mathcal S^1)$ and $\alpha(r)\mathcal S^1+p_0$. The provided bound is quite informative  in the case $p=0.95$ but it is useless when $p=0.55$. Indeed, whenever the distance between $\mathcal X$  and $\mathcal Y$ tends to $0$, e.g. in the case $p_0=0.55$, the Taylor expansion of $(x-y)^{-1}$ converges slowly and this translates in slowly decaying bounds for the singular values of $C(\mathbf x,\mathbf y)$. 
\end{example}

\begin{figure}
	\centering
	\begin{minipage}{0.45\textwidth}
		\begin{tikzpicture}
		\begin{semilogyaxis}[width=.95\linewidth, height=.32\textheight,
		legend style={font=\tiny},
		legend pos = south west,
		xlabel = $k$, xmax = 100]
		\addplot table[x index=0, y index=1]  {bound1.dat};   
		\addplot+[color=red, solid, mark = line] table[ x index=0, y index=2] {bound1.dat};
		\legend{$\sigma_k(C_{P,r}^{(n)})$,Bound \eqref{eq:bound1}};
		\end{semilogyaxis}
		\end{tikzpicture}
	\end{minipage}~\begin{minipage}{0.45\textwidth}
		\begin{tikzpicture}
		\begin{axis}[width=.95\linewidth, height=.32\textheight,
		legend style={font=\tiny},
		legend pos = north east,
		xlabel = Real axis, ylabel = Imaginary axis, xmin=-2, xmax=2, ymin=-1.5, ymax = 2.5]
		\addplot+[color=red, solid, mark = line] table[x index=3, y index=4]  {bound1.dat};   
		\addplot+[color=blue, solid, mark = line] table[x index=5, y index=6]  {bound1.dat}; 
		\addplot+[color=cyan, solid, mark = line] table[x index=7, y index=8]  {bound1.dat}; 
		\legend{$r\mathcal S^1$, $P(r\mathcal S^1)$, $ p_0+\alpha(r)\mathcal S^1$};
		\end{axis}
		\end{tikzpicture}
	\end{minipage}
	%
	\centering
	\begin{minipage}{0.45\textwidth}
		\begin{tikzpicture}
		\begin{semilogyaxis}[width=.95\linewidth, height=.32\textheight,
		legend style={font=\tiny},
		legend pos = north east,
		xlabel = $k$, xmax = 50]
		\addplot [select coords between index={0}{17}, blue, mark = *]  table[x index=0, y index=1]  {bound2.dat};   
		\addplot+[color=red, solid, mark = line] table[ x index=0, y index=2] {bound2.dat};
		\legend{$\sigma_k(C_{P,r}^{(n)})$,Bound \eqref{eq:bound1}};
		\end{semilogyaxis}
		\end{tikzpicture}
	\end{minipage}~\begin{minipage}{0.45\textwidth}
		\begin{tikzpicture}
		\begin{axis}[width=.95\linewidth, height=.32\textheight,
		legend style={font=\tiny},
		legend pos = north east,
		xlabel = Real axis, ylabel = Imaginary axis, xmin=-2.7, xmax=2.7, ymin=-2, ymax=3.4]
		\addplot+[color=red, solid, mark = line] table[x index=3, y index=4]  {bound2.dat};   
		\addplot+[color=blue, solid, mark = line] table[x index=5, y index=6]  {bound2.dat}; 
		\addplot+[color=cyan, solid, mark = line] table[x index=7, y index=8]  {bound2.dat}; 
		\legend{$r\mathcal S^1$, $P(r\mathcal S^1)$, $p_0+\alpha(r)\mathcal S^1$};
		\end{axis}
		\end{tikzpicture}
	\end{minipage}
	\caption{Case $p_0=0.55$ (top) and case $p_0=0.95$ (bottom). On the left, the first singular values of the matrix $C_{P,r}^{(n)}$, with $n=1000$, compared with the bound in \eqref{eq:bound1}. On the right, the regions $r\mathcal S^1$ (in red) and $P(r\mathcal S^1)$ (in blue) containing the sets $\mathcal X$ and $\mathcal Y$, respectively. In light blue the curve $p_0+\alpha(r)\mathcal S^1$ that encloses $\mathcal Y$, by \eqref{eq:P(z)}}
	\label{fig:bound2}
\end{figure}
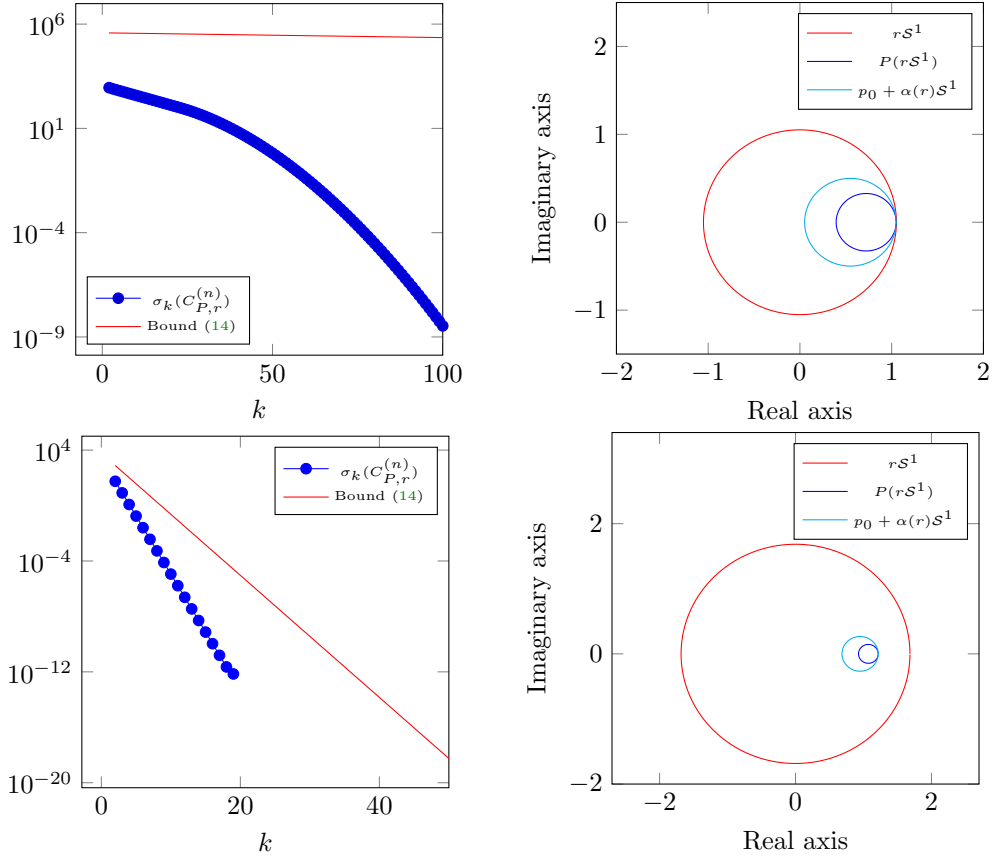

\subsubsection{Bounds linked to rational approximations}
A link between the quantities $\sigma_k(C_{P,r}^{(n)})$ and certain rational approximation problems has been described in \cite{beck17}. This motivates the presence of a fast decay in a wider class of configurations for $\mathcal X$ and $\mathcal Y$.
\begin{theorem}[Beckermann and Townsend \cite{beck17}, Section 4]\label{thm:zol}
	Let $\mathcal R_{k,k}$ denote the set of rational functions
	of the form $r(z)={p(z)}/{q(z)}$, where $p(z)$ and $q(z)$ are polynomials of degree at most
	$k$. Then
	\begin{equation}\label{eq:zol}
	\frac{\sigma_{k+1}(C(\mathbf x,\mathbf y))}{\norm{C(\mathbf x,\mathbf y)}_2}\leq Z_k(\mathcal X,\mathcal Y):=\min_{r\in\mathcal R_{k,k}}\frac{\max_{\mathcal X}|r(z)|}{\min_{\mathcal Y}|r(z)|}.
	\end{equation}
\end{theorem}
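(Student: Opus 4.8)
The plan is to exploit the \emph{displacement structure} of the Cauchy matrix. Write $C:=C(\mathbf x,\mathbf y)$, and let $D_{\mathcal X}:=\diag(x_1,\dots,x_m)$, $D_{\mathcal Y}:=\diag(y_1,\dots,y_n)$. A direct computation from $c_{hj}=(x_h-y_j)^{-1}$ gives the rank-one Sylvester identity
\[
D_{\mathcal X}\,C - C\,D_{\mathcal Y} = \vc 1\vc 1^*,
\]
because the $(h,j)$ entry of the left-hand side equals $(x_h-y_j)c_{hj}=1$ (here $\vc 1\vc 1^*$ is the all-ones matrix of the appropriate size). The whole argument will be deduced from this single identity together with the fact that $D_{\mathcal X}$ and $D_{\mathcal Y}$ are diagonal, hence that any two functions of $D_{\mathcal Y}$ commute.

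The core step is to show that for every $r=p/q\in\mathcal R_{k,k}$ whose denominator $q$ does not vanish on $\mathcal X\cup\mathcal Y$, the matrix $r(D_{\mathcal X})\,C - C\,r(D_{\mathcal Y})$ has rank at most $k$. First I would treat a polynomial $p$ of degree $\le k$: telescoping the Sylvester identity, $D_{\mathcal X}^{\ell}C-CD_{\mathcal Y}^{\ell}=\sum_{i=0}^{\ell-1}D_{\mathcal X}^{\,i}\vc 1\vc 1^*D_{\mathcal Y}^{\,\ell-1-i}$, and collecting terms yields
\[
p(D_{\mathcal X})\,C - C\,p(D_{\mathcal Y}) = \sum_{i=0}^{k-1} D_{\mathcal X}^{\,i}\,\vc 1\,\vc w_i^*
\]
for suitable row vectors $\vc w_i^*$, so its column space lies in the Krylov subspace $\mathcal K:=\Span\{\vc 1, D_{\mathcal X}\vc 1,\dots, D_{\mathcal X}^{\,k-1}\vc 1\}$, of dimension $\le k$. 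For the rational case, multiply $r(D_{\mathcal X})C - Cr(D_{\mathcal Y})$ on the left by $q(D_{\mathcal X})$ and on the right by $q(D_{\mathcal Y})$; writing $p(D_{\mathcal X})C = Cp(D_{\mathcal Y})+E_1$ and $q(D_{\mathcal X})C = Cq(D_{\mathcal Y})+E_2$ with $E_1,E_2$ having columns in $\mathcal K$, and using $p(D_{\mathcal Y})q(D_{\mathcal Y})=q(D_{\mathcal Y})p(D_{\mathcal Y})$, the ``diagonal'' contributions cancel and we are left with $q(D_{\mathcal X})\big(r(D_{\mathcal X})C - Cr(D_{\mathcal Y})\big)q(D_{\mathcal Y}) = E_1 q(D_{\mathcal Y}) - E_2 p(D_{\mathcal Y})$. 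Multiplying back by the invertible diagonal matrices $q(D_{\mathcal X})^{-1}$ and $q(D_{\mathcal Y})^{-1}$, and noting that right-multiplications never enlarge a column space while left-multiplication by $q(D_{\mathcal X})^{-1}$ sends $\mathcal K$ to a subspace of the same dimension, we conclude that $r(D_{\mathcal X})C - Cr(D_{\mathcal Y})$ has column space inside $q(D_{\mathcal X})^{-1}\mathcal K$, hence rank $\le k$. It is essential here to bound the dimension of a union of column spaces rather than to add the ranks of $E_1q(D_{\mathcal Y})$ and $E_2p(D_{\mathcal Y})$, which would only give the bound $2k$.

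Granting this, set $S:=r(D_{\mathcal X})C - Cr(D_{\mathcal Y})$, so that $\rank S\le k$ and, since $r(D_{\mathcal Y})$ is invertible,
\[
C = r(D_{\mathcal X})\,C\,r(D_{\mathcal Y})^{-1} - S\,r(D_{\mathcal Y})^{-1}.
\]
The last summand has rank at most $k$, so by the Eckart--Young theorem \cite[Chapter 3]{horn91} we get $\sigma_{k+1}(C)\le \norm{r(D_{\mathcal X})\,C\,r(D_{\mathcal Y})^{-1}}_2\le \norm{r(D_{\mathcal X})}_2\,\norm{C}_2\,\norm{r(D_{\mathcal Y})^{-1}}_2$. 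Since $r(D_{\mathcal X})$ and $r(D_{\mathcal Y})^{-1}$ are diagonal, their spectral norms are $\max_{\mathcal X}|r|$ and $(\min_{\mathcal Y}|r|)^{-1}$, so $\sigma_{k+1}(C)\le \big(\max_{\mathcal X}|r|/\min_{\mathcal Y}|r|\big)\norm{C}_2$; taking the infimum over $r\in\mathcal R_{k,k}$ gives \eqref{eq:zol}.

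The main obstacle I anticipate is twofold: carefully tracking column spaces through the rational manipulation so as to land at rank $k$ rather than $2k$ (as discussed above), and dealing with rational functions whose numerator or denominator vanishes on the finite sets $\mathcal X$ and $\mathcal Y$. The latter is handled by a routine density argument: the infimum defining $Z_k(\mathcal X,\mathcal Y)$ may be restricted to $r=p/q$ with $p,q$ nonvanishing on $\mathcal X\cup\mathcal Y$, because if $r$ has a pole on $\mathcal X$ or a zero on $\mathcal Y$ the bound is vacuous, while otherwise one can displace the offending roots of $p$ and $q$ by an arbitrarily small amount, keeping $q(D_{\mathcal X})$ and $q(D_{\mathcal Y})$ invertible and changing $\max_{\mathcal X}|r|/\min_{\mathcal Y}|r|$ by at most $\varepsilon$.
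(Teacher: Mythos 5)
The paper does not actually prove this statement --- it is imported verbatim from Beckermann and Townsend \cite{beck17} --- so there is no internal proof to compare against; your blind proof is correct and is essentially the displacement-structure argument of that reference. Starting from the rank-one Sylvester identity $D_{\mathcal X}C-CD_{\mathcal Y}=\vc 1\vc 1^*$, you correctly deduce that $r(D_{\mathcal X})C-Cr(D_{\mathcal Y})$ has rank at most $k$ for $r\in\mathcal R_{k,k}$ (landing on $k$ rather than $2k$ by tracking the common Krylov column space), and the Eckart--Young step $\sigma_{k+1}(C)\leq\norm{r(D_{\mathcal X})\,C\,r(D_{\mathcal Y})^{-1}}_2$ together with the diagonal norm evaluations and the routine perturbation argument for rational functions degenerating on the finite sets $\mathcal X\cup\mathcal Y$ completes the bound exactly as in \cite{beck17}.
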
 
Relation \eqref{eq:zol} bounds the relative singular values with $Z_k(\mathcal X,\mathcal Y)$, usually called the
$k$-th \emph{Zolotarev number}.  Intuitively, these quantities become small when $\mathcal X$ and $\mathcal Y$ are well separated. For example, if $\mathcal X=\{|z|\geq r_1\}$ and $\mathcal Y=\{|z|\leq r_2\}$ with $r_1>r_2$ then \begin{equation}\label{eq:annulus}Z_k(\mathcal X,\mathcal Y)\leq \frac{\max_{\mathcal X}|z^{-k}|}{\min_{\mathcal Y}|z^{-k}|}=\left(\frac{r_2}{r_1}\right)^k.\end{equation} 
It can be shown that the inequality in \eqref{eq:annulus} can be replaced by an equality, see  \cite[Theorem 2.1]{starke}.
In light of \eqref{eq:annulus}, the bound in Theorem~\ref{thm:decay1} can be retrieved from Theorem~\ref{thm:zol} by including all the points $P(z_j)$ in a disc centered at $0$.

In addition, Zolotarev numbers enjoy the following properties.
\begin{proposition}[Akhiezer \cite{akhiezer}]\label{prop:zol}
	Let $\mathcal X,\mathcal Y$ be disjoint subsets of $\mathbb C$ and assume $Z_k(\mathcal X,\mathcal Y)$ is defined as in \eqref{eq:zol}. Then the following properties hold:
	\begin{itemize}
		\item[(i)] Let $\mathcal W,\mathcal Z\subset\mathbb C$ and assume $\mathcal X\subseteq\mathcal W$ and $\mathcal Y\subseteq \mathcal Z$. Then $Z_{k}(\mathcal X,\mathcal Y)\leq Z_{k}(\mathcal W,\mathcal Z)$, $\forall k\in\mathbb N$.
		\item[(ii)] Let $T(z)$ be any M\"obius transform, then $Z_{k}(\mathcal X,\mathcal Y)= Z_{k}(T(\mathcal X),T(\mathcal Y))$, $\forall k\in\mathbb N$.
	\end{itemize}
\end{proposition}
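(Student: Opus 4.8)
The plan is to derive both statements directly from the definition \eqref{eq:zol} of the Zolotarev number $Z_k$, the only substantive ingredient being that the class $\mathcal R_{k,k}$ is preserved under composition with a M\"obius map. For $(i)$, I would fix an arbitrary $r\in\mathcal R_{k,k}$ and note that enlarging the two sets can only increase the numerator and decrease the denominator of the ratio in \eqref{eq:zol}: from $\mathcal X\subseteq\mathcal W$ we get $\max_{\mathcal X}|r(z)|\leq\max_{\mathcal W}|r(z)|$, and from $\mathcal Y\subseteq\mathcal Z$ we get $\min_{\mathcal Y}|r(z)|\geq\min_{\mathcal Z}|r(z)|$. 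Hence the pointwise inequality
\[
\frac{\max_{\mathcal X}|r(z)|}{\min_{\mathcal Y}|r(z)|}\leq \frac{\max_{\mathcal W}|r(z)|}{\min_{\mathcal Z}|r(z)|}
\]
holds for every $r\in\mathcal R_{k,k}$, and taking the infimum over $r$ on both sides (a pointwise inequality between two functions of $r$ is inherited by their infima) gives $Z_k(\mathcal X,\mathcal Y)\leq Z_k(\mathcal W,\mathcal Z)$.

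For $(ii)$, write a M\"obius transform as $T(z)=\frac{az+b}{cz+d}$ with $ad-bc\neq0$, so that $T^{-1}(w)=\frac{dw-b}{-cw+a}$. The key step is the lemma that $r\mapsto r\circ T^{-1}$ is a bijection of $\mathcal R_{k,k}$ onto itself, with inverse $\widetilde r\mapsto\widetilde r\circ T$. To check that $r\circ T^{-1}\in\mathcal R_{k,k}$ when $r=p/q$ with $\deg p,\deg q\leq k$, one substitutes $z=T^{-1}(w)$ and clears denominators: writing $p(z)=\sum_{j=0}^{k}a_jz^j$, the expression $(-cw+a)^k\,p(T^{-1}(w))=\sum_{j=0}^{k}a_j(dw-b)^j(-cw+a)^{k-j}$ is a polynomial in $w$ of degree at most $k$, and the same holds for $q$; since $q\not\equiv0$ forces $q\circ T^{-1}\not\equiv0$, the quotient is a well-defined element of $\mathcal R_{k,k}$. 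Applying the same computation with $T$ in place of $T^{-1}$ shows the reverse map is also well defined, so we have a genuine bijection. Then, for any fixed $r\in\mathcal R_{k,k}$ the change of variable $w=T(z)$ yields $\max_{z\in\mathcal X}|r(z)|=\max_{w\in T(\mathcal X)}|(r\circ T^{-1})(w)|$ and $\min_{z\in\mathcal Y}|r(z)|=\min_{w\in T(\mathcal Y)}|(r\circ T^{-1})(w)|$; substituting this into \eqref{eq:zol} and re-indexing the infimum through the bijection $r\leftrightarrow r\circ T^{-1}$ gives $Z_k(\mathcal X,\mathcal Y)=Z_k(T(\mathcal X),T(\mathcal Y))$.

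The computations above are routine, and I do not anticipate a genuine difficulty; the one point deserving care is the degree bookkeeping that keeps $\mathcal R_{k,k}$ stable under composition with $T$, together with the behaviour at the pole of $T$. If $T$ has a pole inside $\mathcal X$ or $\mathcal Y$ then $T(\mathcal X)$ or $T(\mathcal Y)$ contains the point at infinity, and the cleanest way to handle the maxima and minima in \eqref{eq:zol} is to regard all rational functions of type $(k,k)$ and their moduli as continuous functions on the Riemann sphere, to which they extend naturally; I expect this to be the only (minor) obstacle.
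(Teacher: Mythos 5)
The paper does not prove this proposition itself; it is quoted from Akhiezer, so there is no internal proof to compare against. Your argument is correct and is the standard one: the monotonicity in (i) follows from the pointwise inequality of the ratios before taking the infimum, and (ii) follows from the fact that $r\mapsto r\circ T^{-1}$ is a degree-preserving bijection of $\mathcal R_{k,k}$ onto itself (your clearing-of-denominators computation handles the degree bookkeeping correctly), combined with the change of variable $w=T(z)$ in the max and min. Your remark about the pole of $T$ is the right caveat, handled as you say by working on the Riemann sphere; in the paper's application (Lemma 3.8) the pole $\beta$ lies outside both sets, so the issue does not arise there.
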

For generic complex sets, it  appears  to  be  difficult  to  derive  explicit bounds  for $Z_k(\mathcal X,\mathcal Y)$. However, our situation can be re-casted to the case where $\mathcal X,\mathcal Y$ are the two connected components of the complement of an open annulus.
\begin{lemma}\label{lem:decay2}
	Under the assumptions of Lemma~\ref{lem:decay1}, we have
	\begin{equation}\label{eq:bound2}
	\frac{\sigma_{k+1}(C_{P,r}^{(n)})}{\norm{C_{P,r}^{(n)}}_2}\leq \theta^k \qquad \forall k\geq 0,
	\end{equation}
	where $\theta=\frac{(r-\beta)(P(r)-\alpha)}{(r-\alpha)(P(r)-\beta)}$, $\alpha= \frac{2 p_0 P(r) - P(r)^2 + r^2 + \sqrt{(2 p_0 P(r) - P(r)^2 + 
			r^2)^2-4 p_0^2 r^2}}{2p_0}$ and $\beta =\frac{r^2}{\alpha}$. 
\end{lemma}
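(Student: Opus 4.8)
The plan is to derive \eqref{eq:bound2} from the Zolotarev-number machinery of Theorem~\ref{thm:zol} and Proposition~\ref{prop:zol}, so that the whole problem reduces to computing the conformal modulus of the (eccentric) annulus whose two boundary circles enclose $\mathcal X$ and $\mathcal Y$. Keeping the notation of the proof of Lemma~\ref{lem:decay1}, write $C_{P,r}^{(n)}=C(\mathbf x,\mathbf y)$ with $\mathbf x=(r\xi_j)_j$, $\mathbf y=(P(r\xi_j))_j$, so that $\mathcal X:=\{r\xi_j\}\subset r\mathcal S^1$ and, by \eqref{eq:P(z)}, $\mathcal Y:=\{P(r\xi_j)\}\subset\mathcal D(p_0,P(r)-p_0)$. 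I would enlarge these to $\overline{\mathcal X}:=\{z:|z|\ge r\}$ and $\overline{\mathcal Y}:=\mathcal D(p_0,P(r)-p_0)$; since $r>P(r)$ and $p_0\in[0,1)$, the closed disc $\overline{\mathcal Y}$ lies strictly inside $\mathcal B(0,r)$, hence $\overline{\mathcal X}\cap\overline{\mathcal Y}=\emptyset$. By Proposition~\ref{prop:zol}(i), $Z_k(\mathcal X,\mathcal Y)\le Z_k(\overline{\mathcal X},\overline{\mathcal Y})$, and by Theorem~\ref{thm:zol} it therefore suffices to show $Z_k(\overline{\mathcal X},\overline{\mathcal Y})\le\theta^k$.

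Next I would bring in Proposition~\ref{prop:zol}(ii) to move the configuration to one of two concentric circles. The circles $r\mathcal S^1$ and $p_0+(P(r)-p_0)\mathcal S^1$ are nested and disjoint, so they possess a common pair of points $\alpha,\beta$ that are symmetric (in the sense of circle inversion) with respect to \emph{both} circles. Symmetry with respect to $|z|=r$ forces $\alpha\beta=r^2$, and symmetry with respect to $|z-p_0|=P(r)-p_0$ forces $(\alpha-p_0)(\beta-p_0)=(P(r)-p_0)^2$; eliminating the product gives $\alpha+\beta=\frac{1}{p_0}\!\left(r^2-P(r)^2+2p_0P(r)\right)$, so that $\alpha,\beta$ are exactly the roots of $t^2-\frac{1}{p_0}\!\left(r^2-P(r)^2+2p_0P(r)\right)t+r^2=0$, i.e., the quantities displayed in the statement, with $\beta=r^2/\alpha$. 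A short computation using $p_0<1<P(r)<r$ shows that the discriminant is positive (so $\alpha,\beta\in\mathbb R$) and that $p_0<\beta<P(r)<r<\alpha$; in particular $\beta$ lies inside the inner circle and $\alpha$ outside the outer circle.

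Then I would conclude via the Möbius map $T(z)=\frac{z-\alpha}{z-\beta}$, which sends $\{\alpha,\beta\}$ to $\{0,\infty\}$. A Möbius transformation carries pairs of inverse points to pairs of inverse points, and $\{0,\infty\}$ is such a pair for a circle only when that circle is centred at the origin; hence $T$ maps both $r\mathcal S^1$ and $p_0+(P(r)-p_0)\mathcal S^1$ onto circles centred at $0$, of radii $\rho_1:=|T(r)|=\left|\frac{r-\alpha}{r-\beta}\right|$ and $\rho_2:=|T(P(r))|=\left|\frac{P(r)-\alpha}{P(r)-\beta}\right|$. Since $\alpha\in\overline{\mathcal X}$ with $T(\alpha)=0$ and $\beta\in\overline{\mathcal Y}$ with $T(\beta)=\infty$, the image $T(\overline{\mathcal X})$ is the closed disc bounded by one of these circles and $T(\overline{\mathcal Y})$ is the complement of the open disc bounded by the other; disjointness fixes which radius is the smaller one. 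Now Proposition~\ref{prop:zol}(ii) gives $Z_k(\overline{\mathcal X},\overline{\mathcal Y})=Z_k(T(\overline{\mathcal X}),T(\overline{\mathcal Y}))$, and applying \eqref{eq:annulus} (in the sharp form \cite[Theorem 2.1]{starke}) to this concentric pair bounds it by the $k$-th power of the ratio of the two radii. Simplifying that ratio with $\beta<P(r)<r<\alpha$ yields the closed-form constant $\theta$ of the statement, and combining with the chain $\sigma_{k+1}(C_{P,r}^{(n)})/\|C_{P,r}^{(n)}\|_2\le Z_k(\mathcal X,\mathcal Y)\le Z_k(\overline{\mathcal X},\overline{\mathcal Y})\le\theta^k$ finishes the proof.

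The main obstacle is not conceptual but lies in the bookkeeping of the last step: one must verify that $\alpha,\beta$ are real and located in the correct connected components, decide which of $T(\overline{\mathcal X}),T(\overline{\mathcal Y})$ becomes the bounded disc, and take the ratio of radii that is genuinely less than $1$, so that the resulting $\theta$ lies in $(0,1)$ as required. The algebra turning $\rho_1/\rho_2$ into the displayed expression for $\theta$ is elementary but sign-sensitive, since it hinges on the orderings of the factors $r-\alpha$, $P(r)-\alpha$, $r-\beta$, $P(r)-\beta$; this, together with the degenerate case $p_0=0$ (where the two circles are already concentric and $\theta=P(r)/r$), is where I would spend most of the care.
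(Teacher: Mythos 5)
Your proposal follows essentially the same route as the paper's proof: bound the relative singular values by a Zolotarev number via Theorem~\ref{thm:zol}, enlarge the node sets to $r\mathcal S^1$ and $\mathcal D(p_0,P(r)-p_0)$ using \eqref{eq:P(z)} and Proposition~\ref{prop:zol}(i), send these to concentric circles with the M\"obius map $T(z)=(z-\alpha)/(z-\beta)$ built from the common inverse points $\alpha\beta=r^2$, $(\alpha-p_0)(\beta-p_0)=(P(r)-p_0)^2$, and conclude with Proposition~\ref{prop:zol}(ii) and \eqref{eq:annulus}. Your extra bookkeeping is sound and in fact exposes a labeling slip in the lemma as printed: with $\alpha$ taken as the larger root (the ``$+$'' sign), one has $\beta<P(r)<r<\alpha$, the pole $\beta$ lies inside the inner disc so $T$ sends that disc to the \emph{exterior} of a circle, and the radius ratio that is genuinely below $1$ is $\frac{(\alpha-r)(P(r)-\beta)}{(r-\beta)(\alpha-P(r))}$, i.e.\ the reciprocal of the displayed $\theta$; the displayed formula becomes consistent (and matches the paper's proof, which implicitly treats $\alpha$ as the point inside $\mathcal D(p_0,P(r)-p_0)$) once $\alpha$ and $\beta$ are interchanged, so your final ``simplification to the stated $\theta$'' should be read modulo that swap.
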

\begin{proof}
	By Theorem~\ref{thm:zol} and Proposition~\ref{prop:zol} (i) we have  ${\sigma_{k+1}(C_{P,r}^{(n)})}/{\norm{C_{P,r}^{(n)}}_2}\leq Z_k(\mathcal W,\mathcal Z)$ where $\mathcal W=r\cdot \mathcal S^1$ and $\mathcal Z=\{|z-p_0|\leq P(r)-p_0\}$. The idea is to consider a M\"obius transformation that maps $\mathcal W$ and $\partial \mathcal Z$ into two concentric circles centred in the origin and that maps the inner part of $\mathcal Z$ into the inner part of the smaller disc. The  M\"obius transformation that satisfies these requirements is given by $T(z)={(z-\alpha)}/{(z-\beta)}$ where the coefficients $\alpha,\beta$ are common \emph{inverse points for the circles $\mathcal W$ and $\partial \mathcal Z$} \cite[Section 4.2]{Henrici}. Algebraically, $\alpha$ and $\beta$ solve the system
	\[
	\begin{cases}
	\alpha \beta =r^2\\
	(\alpha - p_0) (\beta - p_0) = (P(r) - p_0)^2,
	\end{cases}
	\]
	and $T(z)$ maps $\mathcal W$ into ${(r-\alpha)}/{(r-\beta)}\cdot \mathcal S^1$ and $\mathcal Z$ into $\mathcal D\left(0, {(P(r)-\alpha)}/{(P(r)-\beta)}\right)$. Hence, the claim follows by applying Proposition~\ref{prop:zol} (ii) with $T(z)$ and the bound in \eqref{eq:annulus}.
\end{proof}
\begin{figure}
	\centering
	\begin{minipage}{0.45\textwidth}
		\begin{tikzpicture}
		\begin{semilogyaxis}[width=.95\linewidth, height=.32\textheight,
		legend style={font=\tiny},
		legend pos = north east,
		xlabel = $k$, xmax = 50]
		\addplot+[select coords between index={0}{50}, color=blue, solid, mark = *] table[x index=0, y index=1]  {bound3.dat};   
		\addplot+[select coords between index={0}{50}, color=red, solid, mark = line] table[ x index=0, y index=2] {bound3.dat};
		\legend{$\frac{\sigma_k}{\sigma_1}\left(C_{P,r}^{(n)}\right)$,Bound \eqref{eq:bound2}};
		\end{semilogyaxis}
		\end{tikzpicture}
	\end{minipage}~\begin{minipage}{0.45\textwidth}
		\begin{tikzpicture}
		\begin{axis}[width=.95\linewidth, height=.32\textheight,
		legend style={font=\tiny},
		legend pos = north east,
		xlabel = Real axis, ylabel = Imaginary axis, xmin=-1.5,xmax=1.5, ymin=-1, ymax = 2]
		\addplot+[color=red, solid, mark = line] table[x index=3, y index=4]  {bound3.dat};   
		\addplot+[color=blue, solid, mark = line] table[x index=5, y index=6]  {bound3.dat}; 
		\addplot+[color=cyan, solid, mark = line] table[x index=7, y index=8]  {bound3.dat}; 
		\legend{$T(r\mathcal S^1)$, $T(P(r\mathcal S^1))$, $ T(p_0+\alpha(r)\mathcal S^1)$};
		\end{axis}
		\end{tikzpicture}
	\end{minipage}
	\caption{Case $p_0=0.55$. On the left, the first relative singular values of $C_{P,r}^{(n)}$, with $n=1000$, compared with the bound in \eqref{eq:bound2}. On the right, the image of the regions $r\mathcal S^1$, $P(r\mathcal S^1)$and $p_0+\alpha(r)\mathcal S^1$ under the M\"oebius transformation $T(z)={(z-\alpha)}/{(z-\beta)}$}
	\label{fig:bound3}
\end{figure}
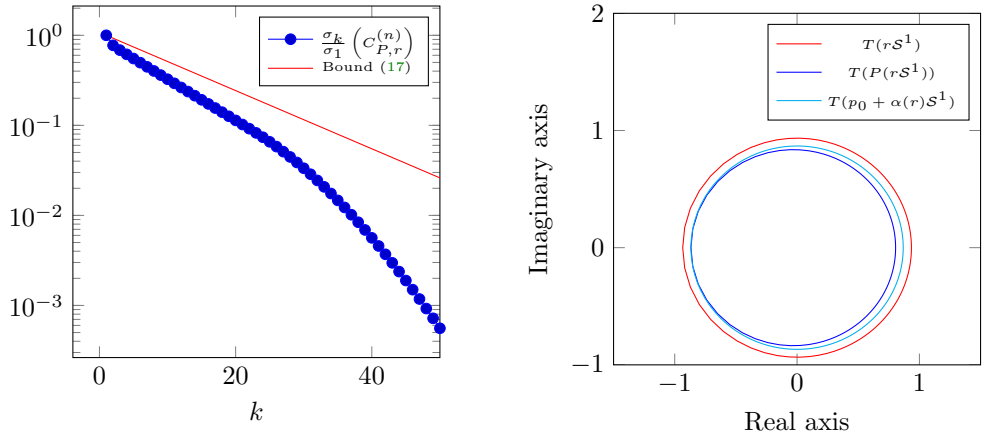

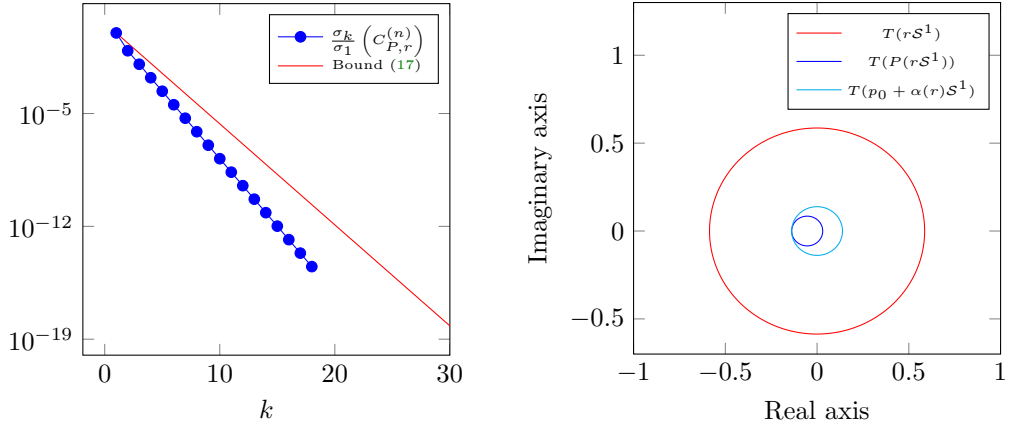
\begin{figure}
	\centering
	\begin{minipage}{0.45\textwidth}
		\begin{tikzpicture}
		\begin{semilogyaxis}[width=.95\linewidth, height=.32\textheight,
		legend style={font=\tiny},
		legend pos = north east,
		xlabel = $k$, xmax = 30]
		\addplot [select coords between index={0}{17}, blue, mark = *]  table[x index=0, y index=1]  {bound4.dat};   
		\addplot+[select coords between index={0}{30}, color=red, solid, mark = line] table[ x index=0, y index=2] {bound4.dat};
		\legend{$\frac{\sigma_k}{\sigma_1}\left(C_{P,r}^{(n)}\right)$,Bound \eqref{eq:bound2}};
		\end{semilogyaxis}
		\end{tikzpicture}
	\end{minipage}~\begin{minipage}{0.45\textwidth}
		\begin{tikzpicture}
		\begin{axis}[width=.95\linewidth, height=.32\textheight,legend style={font=\tiny},
		legend pos = north east,
		xlabel = Real axis, ylabel = Imaginary axis,xmin=-1,xmax=1,ymin=-0.7, ymax=1.3]
		\addplot+[color=red, solid, mark = line] table[x index=3, y index=4]  {bound4.dat};   
		\addplot+[color=blue, solid, mark = line] table[x index=5, y index=6]  {bound4.dat}; 
		\addplot+[color=cyan, solid, mark = line] table[x index=7, y index=8]  {bound4.dat}; 
		\legend{$T(r\mathcal S^1)$, $T(P(r\mathcal S^1))$, $T(p_0+\alpha(r)\mathcal S^1)$};
		\end{axis}
		\end{tikzpicture}
	\end{minipage}
	\caption{Case $p_0=0.95$. On the left, the first relative singular values of $C_{P,r}^{(n)}$, with $n=1000$, compared with the bound in \eqref{eq:bound2}. On the right, the image of the regions $r\mathcal S^1$, $P(r\mathcal S^1)$and $p_0+\alpha(r)\mathcal S^1$ under the M\"oebius transformation $T(z)={(z-\alpha)}/{(z-\beta)}$}
	\label{fig:bound4}
\end{figure}
\begin{example}
	The qualitative behavior of the bound in Lemma~\ref{lem:decay2} on the case considered in Example~\ref{ex:bound} is shown in Figures~\ref{fig:bound3}--\ref{fig:bound4}. We also plot the action of the M\"oebius transform ${(z-\alpha)}/{(z-\beta)}$ on the sets $r\mathcal S^1$, $P(r\mathcal S^1)$ and $p_0+\alpha(r)\mathcal S^1$. Inequality \eqref{eq:bound2} achieves a sharper description of the slope of the decay, especially for what concerns the first singular values. We expect that a complete sharpness is not attainable due to the fact that we are using an estimate for $P(r\cdot \mathcal S^1)$. Moreover, in order to capture the superlinear behavior that appears in the case $p_0=0.55$ one might consider the Zolotarev numbers on the discrete sets $\mathcal X,\mathcal Y$, see for example \cite{beck-gry}. The latter is beyond the scope of this paper.  	
\end{example}		
\subsection{Exploiting the structure in Algorithm~\ref{alg:ev-int}}\label{sec:exploit}
The results in Section~\ref{sec:rank} ensure that the matrix $A$ in \eqref{eq:G3} can be well approximated by the sum $UV^*-mI_n$ where $U,V\in\mathbb C^{n\times k}$   are tall and skinny matrices (i.e., $k\ll n$) that constitute a low-rank approximation of $C_{P,r}^{(n)}\cdot \diag\left(\frac{r\xi_1}{n},\dots,\frac{r\xi_n}{n}\right)$. More specifically, we just need to store two $n\times k$ matrices  and one scalar in order to represent $A$, making the memory consumption linear with respect to the number of integration nodes. 

The factors $U$ and $V$ are computed by means of  the \emph{Adaptive Cross Approximation with partial pivoting} (ACA) \cite[Algorithm 1]{borm} whose pseudocode is reported in Algorithm~\ref{alg:aca}. This procedure is heuristic but experimentally effective for the case studies reported in this paper. Note that Algorithm~\ref{alg:aca} only needs to have a cheap access to the entries of its argument, so there is no need to form the full matrix $C_{P,r}^{(n)}$ in order to compress it. In all the numerical tests  that call Algorithm~\ref{alg:aca}, we have set $\tau=10^{-10}$.

In order to keep the rank $k$ of the approximation  as low as possible one might apply a  re-compression technique --- e.g. \cite[Algorithm 2.17]{hack} --- to the factors $U$ and $V$ returned by Algorithm~\ref{alg:aca}. Experimentally, we notice that this strategy does not bring any advantage in term of computational time, hence we do not apply any recompression method.

We also use the structure of $A$ in the computation of the eigenvector associated with its smallest eigenvalue. Indicating  with $\lambda_{min}(\cdot)$ the  eigenvalue of smallest magnitude of the (matrix) argument,  we notice that if $\lambda_{min}(UV^*-mI_n)\neq -m$, then
$
\lambda_{min}(UV^*-mI_n)=\lambda_{min}(V^*U-mI_k)=:\widetilde \lambda.
$
Moreover, if $\mathbf v_{min}$ is such that $(V^*U-mI_k)\mathbf v_{min}=\widetilde \lambda \mathbf v_{min}$ then 
$U\mathbf v_{min}$ satisfies
$
(UV^*-mI_n)U\mathbf v_{min}= \widetilde \lambda U\mathbf v_{min}.
$
This suggests the procedure outlined in Algorithm~\ref{alg:eigs-lr}, that has a $\mathcal O(nk^2 + k^3)$ cost.
Replacing \texttt{eigs} in Line 6 of Algorithm~\ref{alg:ev-int} with Algorithm~\ref{alg:eigs-lr}, we get the structured procedure for computing the coefficients of $G(z)$ that is summarized in Algorithm~\ref{alg:lr-ev-int}. 

The method is tested on the problematic Example~\ref{ex:2}, where we considered larger values of $n$. We observe that the computed coefficients $\widehat g_j$ are positive, up to machine precision, and they sum up to $1$ in all cases.  A complete picture of this test is shown in Figure~\ref{fig:e3}. The rank of the approximation of $C_{P,r}^{(n)}$ (returned by Algorithm~\ref{alg:aca}) is reported in the  column with the label ``rank''. This quantity  seems to stabilize around a value less than $500$. When the rank growth is limited (as in the last two numerical tests)  the computational times confirm the almost linear complexity with respect to $n$. The residual error remains around the value $10^{-10}$, this quantity being a reasonable estimate of the approximation error which affects the outcome of ACA. By decreasing the value of $\tau$ when running Algorithm~\ref{alg:aca}, the method provides more accurate results for $n=1.31\cdot 10^5, 2.62\cdot 10^5$.
\begin{algorithm}
	\caption{Adaptive cross approximation with partial pivoting}\label{alg:aca}
	\begin{algorithmic}[1]
		\Procedure{ACA}{$C,\tau$}\Comment{Computes the low-rank approximation $C\approx UV^*$}
		\State Choose a starting $i_1^*$ 
		\State Set $k\gets 1$, $U\gets[\ ]$, $V\gets [\ ]$ 
		\For {$k=1,2,\dots$}
		\State $v\gets C_{i_k^*,:}-U_{i_k^*,:}\ V^*$	
		\State $j_k^*\gets\arg\max_j |v_j|$
		\State $u\gets\left( C_{:,j_k^*}-U\ V_{j_k^*,:}^*\right) / v_{j_k^*}$
		\State $U\gets[U, u]$
		\State $V\gets[V, v^*]$
		\If{$\norm{u}_2\norm{v}_2<\tau$}
		\State break
		\EndIf
		\State $i_k^*\gets \arg\max_{i\neq i_k^*}|u_j|$
		\EndFor
		\State \Return $ U,V$
		\EndProcedure
	\end{algorithmic}
\end{algorithm}
\begin{algorithm}
	\caption{}\label{alg:eigs-lr}
	\begin{algorithmic}[1]
		\Procedure{Eigs\_LR}{$U,V,m$}\Comment{Computes the smallest eigenvector of $UV^*-mI$}
		\State $v=\Call{Eigs}{V^*U-mI_k}$ 
		\State \Return $ Uv$
		\EndProcedure
	\end{algorithmic}
\end{algorithm}
\begin{algorithm}
	\caption{Low-rank Evaluation-Interpolation}\label{alg:lr-ev-int}
	\begin{algorithmic}[1]
		\Procedure{Compute\_G\_LR}{$P(z), n, r$}\Comment{$r>P(r)>1$}
		\State $m \gets P^{(1)}(1)$
		\State $\mathbf{\xi}\gets\left(r\cdot e^{\frac{2\pi \mathbf i j}{n}}\right)_{j=1,\dots, n}$
		\State $[U,V]\gets$\Call{ACA}{$C_{P,r}^{(n)}$}
		\State $V \gets \frac{1}{n}V\cdot \diag\left(\xi\right)$
		\State $\mathbf v_f\gets$ \Call{Eigs\_LR}{$U,V,m$}
		\State $\mathbf f\gets$\Call{IFFT}{$\mathbf v_f$},\quad$\mathbf f\gets \left( \frac{f_j}{r^{j}}\right)_{j=0,\dots,n-1}$
		\State $\mathbf{\widehat g}\gets -\frac{1}{f_0}\mathbf f$,\quad $\widehat g_0\gets0$ 
		\State \Return $\mathbf{\widehat g}$
		\EndProcedure
	\end{algorithmic}
\end{algorithm} 

\begin{figure}
	\centering
	\begin{minipage}{0.55\textwidth}
		\pgfplotstabletypeset[%
		sci zerofill,
		columns={0,1,2,4, 5},
		columns/0/.style={column name=$n$},
		columns/1/.style={column name=Time (s), fixed},
		columns/2/.style={column name=Res},
		columns/4/.style={column name=$\sum \widehat g_j$},
		columns/5/.style={column name=rank}
		]{e3.dat}
	\end{minipage}~\begin{minipage}{0.42\textwidth}
		\bigskip 
		
		\begin{tikzpicture}
		\begin{semilogyaxis}[width=.95\linewidth, height=.27\textheight,
		legend pos = north east,
		xlabel = $j$, ymax=1e3]
		\addplot+[mark=o] table[x index=0, y index=2]  {e3G.dat};   
		\addplot[domain=0:305,dashed] {1.026^(-x) };
		\legend{$\widehat g_j$,$\psi_P^{-j}$};
		\end{semilogyaxis}
		\end{tikzpicture}
	\end{minipage}
	\caption{Example~\ref{ex:2}. On the left, performances of Algorithm~\ref{alg:lr-ev-int} as  $n$ increases. On the right, comparison between the estimated coefficients of $G(z)$, in the case $n=262144$, and the decay suggested by Corollary~\ref{cor:radius}.}
	\label{fig:e3}
\end{figure}  
\subsection{Taking advantage of self-similarity when $m\approx 1$}
The closer $m$ to $1$, the higher the rank $k$ of the approximation of  $C_{P,r}^{(n)}\cdot \diag\left(\frac{r\xi_1}{n},\dots,\frac{r\xi_n}{n}\right)$. Therefore, to limit resource consumption, when $m\approx 1$, one can think about exploiting self-similarity of Cauchy matrices. Indeed, every sub matrix of $C(\mathbf x,\mathbf y)$ is again a Cauchy matrix whose generators are subvectors $\mathbf{\tilde x}:=\left( x_j\right)_{j\in J_1},$ $\mathbf{\tilde y}:=\left( y_j\right)_{j\in J_2}$. In our setting, $\mathbf x,\mathbf y$ represent samplings of closed curves that rotate counterclockwise, so, intuitively, selecting  disjointed subsets $J_1,J_2$ of $\{1,\dots,n\}$ provides well separated sets of nodes. This translates in saying that the rank of the off-diagonal blocks is  smaller than the rank of $C(\mathbf x,\mathbf y)$ and sometimes the difference is substantial; see the example reported in Figure~\ref{fig:partitioning}. In these cases, it is advisable to rely on representations like $\mathcal H^2$ \cite{borm} and HSS \cite{chandra-hss} that aim at compressing the off-diagonal submatrices while keeping the small diagonal blocks in the dense format. Adopting this strategy still allows us to store and operate with matrices with a $\mathcal O(n)$ complexity. The $\mathcal H^2$ and HSS representations of the matrix $A$ can be obtained by applying the algorithms described in \cite{martinson, smash}. The use of these more sophisticated formats is beyond the scope of this paper and might be the subject of future investigations.

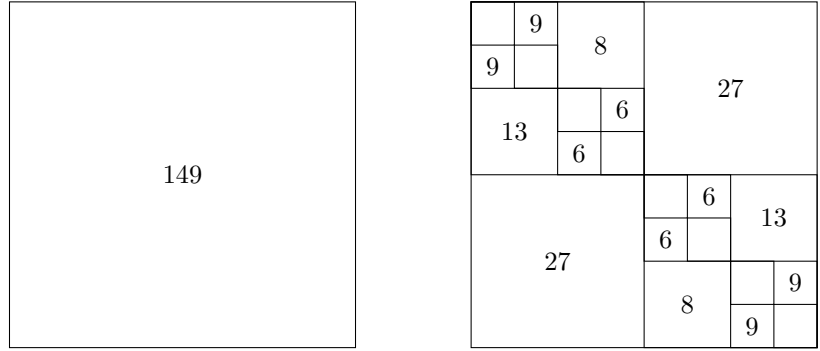
\begin{figure}
	
	\centering
	\begin{tikzpicture}[x = 1.15cm, y = 1.15cm]
	\node at (2, 2){$149$};
	\draw (0,0) rectangle (4,4);
	
	\end{tikzpicture}~~~~~~~~~~~~~\begin{tikzpicture}[x = 1.15cm, y = 1.15cm]
	\node at (1,1)     {$27$};
	\node at (3,3)     {$27$};
	\node at (2.5,.5)  {$8$};
	\node at (3.5,1.5) {$13$};
	\node at (.5,2.5)  {$13$};
	\node at (1.5,3.5) {$8$};
	\node at (.25, 3.25){$9$};
	\node at (.75, 3.75){$9$};
	\node at (1.25, 2.25){$6$};
	\node at (1.75, 2.75){$6$};
	\node at (2.25, 1.25){$6$};
	\node at (2.75, 1.75){$6$};
	\node at (3.25, .25){$9$};
	\node at (3.75, .75){$9$};
	\draw (0,0) rectangle (4,4);
	
	\foreach \i in {0, 2} {
		\draw (\i,4-\i) rectangle (\i+2,2-\i);
	}
	\foreach \i in {0,...,7} {
		\draw (\i*.5,4-\i*.5) rectangle (\i*.5+.5,3.5- \i*.5);
	}
	\foreach \i in {0, ..., 3} {
		\draw (\i,4-\i) rectangle (\i+1,3-\i);
	}	
	\end{tikzpicture}
	\caption{Rank (left) and off-diagonal rank distribution (right) of the matrix $C_{P,r}^{(n)}$. We set $n=4096$, $r\approx 1.0078$ and $P(z)$ equal to the polynomial of degree $7$ with coefficients $p_0=0.765$, $p_1 = 0.016$, $p_2=0.039$, $p_3=0.034$, $p_4=0.049$, $p_5=0.043$, $p_6= 0.005$ and $p_7=0.049$ which yields $m=0.98$.}
	\label{fig:partitioning}
\end{figure}
	\section{Multitype processes}\label{sec:multi}
	In a multitype GW process, individuals of each type can give birth to children of various types according to a progeny distribution specific to the parental type. For the sake of clarity, in this section we consider the two-type case; analogous arguments can be applied to the case of an arbitrary (yet finite) number of types. 
	
	For $j=1,2$ and $h,k\in \mathbb N$, we denote by $p^{(j)}_{h,k}$ the probability that a type-$j$ individual produces $h$ children of type $1$ and $k$ children of type $2$; we let $P_j(x,y):=\sum_{(h,k)\in\mathbb N^2}p^{(j)}_{h,k}\ x^h y^k$ denote the offspring generating function of a type-$j$ individual, and $\vc P(x,y):=(P_1(x,y),P_2(x,y))^\top$.
	The mean progeny matrix is defined as
	\begin{equation}\label{MM}M:=\left.
	\begin{bmatrix}
	\frac{\partial P_1(x,y)}{\partial x}&\frac{\partial P_1(x,y)}{\partial y}\\
	\frac{\partial P_2(x,y)}{\partial x}&\frac{\partial P_2(x,y)}{\partial y}
	\end{bmatrix}\right|_{(x,y)=(1,1)},
	\end{equation}and is assumed throughout the section to be positive regular, that is, $\exists n\geq1$ such that $(M^n)_{ij}>0$ for all $i,j=1,2$. Analogue to the mean offspring $m$ in the single-type case, the Perron-Frobenius eigenvalue $\rho$ of $M$ determines the criticality of the process. Here we consider the subcritical case $\rho<1$ with almost sure extinction regardless the type of the initial individual \cite[Theorem 7.1]{harris}.
	
	Let $\vc Z_n:=(Z_{n,1},Z_{n,2})$ denote the population size in both types at generation $n\geq 0$. We assume that the second moments of the offspring distribution are finite, that is, $\mathbb E(Z_{n,i}Z_{n,j}\,|\,\vc Z_0=\vc e_k)<\infty$ for all $i,j,k=1,2$. If the process starts with a single individual of type $j$, then the conditional probability distribution of $\vc Z_n$, given $\vc Z_n\neq 0$, converges as $n\rightarrow\infty$ to a limiting bivariate quasi-stationary distribution $\vc g^{(j)}:=(g^{(j)}_{h,k})_{(h,k)\in\mathbb N^2}$, whose probability generating function $$G_j(x,y):=\sum\limits_{(h,k)\in\mathbb N^2}g^{(j)}_{h,k}\ x^h y^k, \quad x,y\in[0,1],$$ satisfies
	\begin{equation}\label{2Deq}G_j(\vc P(x,y)) = \rho \cdot G_j(x,y) +
	1-\rho,\quad j=1,2;\end{equation}see for instance \cite[Theorem 9.1]{harris} and \cite[Chapter 4, Theorem 2]{athreya2004branching} for a stronger result without the second moment assumption. Here we aim at computing $G_j(x,y)$
	with $g^{(j)}_{h,k}\geq 0$ and $G_j(0,0)=0$ for $j=1,2$.
	Note that $G_1(x,y)= G_2(x,y)$, therefore we remove the  subscript of $G(x,y)$ and the superscript of the coefficients $g_{h,k}$.

	\subsection{The bivariate linear fractional case}\label{sec:2d-lin-frac}
	
	In the two-type case, the progeny distributions of a linear fractional GW process take the form
	$$
	P_1(x,y)=\dfrac{s_{11} x + s_{12} y +b_1}{c_1 x+c_2 y +d},\qquad
	P_2(x,y)=\dfrac{s_{21} x + s_{22} y +b_2}{c_1 x+c_2 y +d}$$ for some real parameters $s_{ij}$, $b_i$, $c_i$, $d$, $i=1,2$. Defining the matrix $S:=(s_{ij})_{i,j=1,2}$ and the vector $\vc c:=(c_1,c_2)$, the mean progeny matrix is given by $M=(S-\vc 1\otimes \vc c)/(c_1+c_2+d)$. Let $\vc\nu$ denote its left Perron-Frobenius eigenvector, normalised such that $\nu_1+\nu_2=1$. Finally, let $\vc t:=(t_1,t_2)=-\vc c/d$, $t_0=1-(t_1+t_2)$, $\vc w:=(w_1,w_2)=\vc t/t_0$, and $\vc\mu:=\vc w (I-M)^{-1}/(1+\vc w (I-M)^{-1}\vc 1)$. Then, the generating function of the quasi-stationary distribution is given by 
	$$G(x,y)=\dfrac{(\nu_1-\mu_1) x+(\nu_2-\mu_2) y}{1-\mu_1 x-\mu_2 y},$$
	see \cite[Theorem 1]{joffe2006multitype}.
	This provides the explicit expression
	\begin{equation}\label{eq:ghk}
	g_{h,k}= (\nu_1-\mu_1)\binom{h+k-1}{k}\mu_1^{h-1}\mu_2^k+(\nu_2-\mu_2)\binom{h+k-1}{h}\mu_1^{h}\mu_2^{k-1},
	\end{equation}
	where the binomial coefficient $\binom{a}{b}$ is assumed to be equal to $0$ whenever $b>a$.

	\subsection{Extension of Algorithm~\ref{alg:ev-int} to two dimensions}
	%
	%
	
	Similar to the one-dimensional case, we define for $j=1,2$
	\[
	\psi_{P_j}=\begin{cases}
	\infty&\text{if $P_j(x,x) $ is of degree $1$}\\
	r_{P_j}&\text{if $P_j(r_{P_j},r_{P_j})<r_{P_j} $}\\
	\widehat z \in(1,\infty):\ \widehat z=P_j(\widehat z,\widehat z)
	& \text{otherwise}, \end{cases}
	\]
	where $r_{P_j}$ denotes the radius of convergence of $P_j(x,x)$.
	In particular, given $r_1\in(1,\psi_{P_1})$ and $r_2\in(1,\psi_{P_2})$, the function $P(x,y)$ is holomorphic on a open neighborhood of the polydisc $\mathcal D(0,r_1)\times \mathcal D(0,r_2)$. 
	
	As in  the previous case, 
	every function of the form $G(x,y)= 1 + t\cdot f(x,y)$ such that $t\in\mathbb C$ and\begin{equation}\label{eq:eig-2D}
	f(\vc P(x,y))-\rho\cdot f(x,y)\equiv 0,
	\end{equation}
	solves \eqref{2Deq}. By construction, 
	$\forall (x,y)\in (r_1 \mathcal S^1\times r_2 \mathcal S^1 )$ we have $|P_j(x,y)|< r_j$, $j=1,2$, hence
	applying the multivariate Cauchy integral formula to \eqref{eq:eig-2D} provides
	\begin{equation}\label{eq:G2D2}
	\frac{1}{(2\pi\vc i)^2}\int_{r_1\mathcal S^1}\int_{r_2\mathcal S^1}\frac{f(\tilde x,\tilde y)}{(\tilde x-P_1(x,y))(\tilde y-P_2(x,y))}\ d\tilde x \ d\tilde y-\rho \ f(x,y)=0.
	\end{equation}
	Then, we approximate \eqref{eq:G2D2} by means of the composite trapezoidal rule, i.e., for both integrals we select as nodes of integration  the scaled $n$-th roots of unity: 
	\begin{equation}\label{eq:2D-trap}
	\frac{1}{(2\pi\vc i)^2}\int_{r_1\mathcal S^1}\int_{r_2\mathcal S^1}\frac{f(\tilde x,\tilde y)}{(\tilde x-P_1(x,y))(\tilde y-P_2(x,y))}\ d\tilde x \ d\tilde y\approx \sum_{h,k=0}^{n-1} \frac{f(r_1\xi_h, r_2\xi_k)\cdot r_1r_2\xi_{h+k}}{{n^2} (r_1\xi_h- P_1(x,y))(r_2\xi_k- P_2(x,y))}.
	\end{equation}
	Evaluating \eqref{eq:2D-trap} in all the pairs of scaled $n$-th roots of unity yields
	\[
	\sum_{h,k=0}^{n-1} \frac{f(r_1\xi_h, r_2\xi_k)\cdot r_1r_2\xi_{h+k}}{{n^2}\cdot (r_1\xi_h- P_1(r_1\xi_s,r_2\xi_t))(r_2\xi_k- P_2(r_1\xi_s,r_2\xi_t))}-\rho f(r_1\xi_s,r_2\xi_t)\approx 0,\quad s,t=1,\dots,n.
	\]
	Rearranging the (approximate) evaluations of $f$ into the vector $\vc v_f\in\mathbb C^{n^2}$,  $(\vc v_f)_{h+nk} \approx f(r_1\xi_h,r_2\xi_k)$, leads us to the eigenvalue problem
	\begin{equation}\label{eq:G2D3}
	A \vc{v_f}= \lambda_{min}\vc{v_f},\qquad A = C_{P_1,P_2,r_1,r_2}^{(n^2)} D-\rho I_{n^2}\in\mathbb C^{n^2\times n^2},
	\end{equation} 
	where \[
	\left(C_{P_1,P_2,r_1,r_2}^{(n^2)}\right)_{s+n(t-1),h+n(k-1)}:= \frac{1}{(r_1\xi_h- P_1(r_1\xi_s,r_2\xi_t))(r_2\xi_k- P_2(r_1\xi_s,r_2\xi_t))},
	\] $D=n^{-2}\diag(\vc{\xi}_{(1)}\otimes\vc{\xi}_{(2)})$, and $\vc{\xi}_{(j)}\in\mathbb C^n $ is the vector containing the $n$-th roots of unity in the counterclockwise order multiplied by the constant $r_j$.
	
	After computing a vector $\vc{v_f}$ that verifies \eqref{eq:G2D3}, we apply the two-dimensional FFT on it; for example, this task is performed by the MATLAB command \texttt{ifft2(reshape($\vc{v_f}$, n, n))}. This returns the (approximate) coefficients of the interpolating bivariate polynomial $\sum_{h,k=0}^{n-1}\widetilde f_{h,k}x^hy^k$ for $\widetilde f(x,y):=f(r_1 x,r_2 y)$.  In order to obtain those for $f(x,y)$ we rescale them with the rule $f_{h,k}\gets \widetilde f_{h,k}/(r_1^hr_2^k)$.  Once again, we impose the boundary condition  $0=G(0,0)=1+tf(0,0)=1+tf_{0,0}$, which implies $t=-{1}/{f_{0,0}}$. This yields the following (approximate) interpolating bivariate polynomial $\widehat G(x,y)$ for $G(x,y)$:
	\[
	\widehat G(x,y):=\sum_{h,k=0}^{n-1}\widehat g_{h,k}x^hy^k,\qquad \begin{cases}\widehat g_{0,0}=0\\ \widehat g_{h,k}= -\frac{f_{h,k}}{f_{0,0}},&(h,k)\neq(0,0).\end{cases}
	\]
	The whole procedure is summarized in Algorithm~\ref{alg:ev-int-2D}. 
	
	In our implementation, the parameters $r_1$ and $r_2$ are set as $r_j=\arg\min_{x\geq 1} P_j(x,x)-x$, $j=1,2$. A significant difference in the parameters $r_1$ and $r_2$ suggests the use of different levels of discretization on the two integrals. This requires to slightly modify Algorithm~\ref{alg:ev-int-2D} in order to consider $n_1$ and $n_2$ integration nodes for the two integrals in \eqref{eq:2D-trap}. In the numerical experiments of Section~\ref{sec:2d-lin-frac} we always use $n_1=n_2=n$.

	\begin{algorithm}
		\caption{Evaluation-Interpolation in the 2D case}\label{alg:ev-int-2D}
		\begin{algorithmic}[1]
			\Procedure{Compute\_G\_2D}{$P_1(x,y),r_1, P_2(x,y),r_2, n$}\Comment{$r_j\in(1,\psi_{P_j})$}
			\State $\rho \gets$ spectral radius of $M$ given by \eqref{MM}
			\State $\vc{\xi}\gets\left(e^{\frac{2\pi \vc i j}{n}}\right)_{j=1,\dots, n}$
			\State $A\gets\left(\frac{1}{(r_1\xi_h- P_1(r_1\xi_s,r_2\xi_t))(r_2\xi_k- P_2(r_1\xi_s,r_2\xi_t))}\right)_{s+n(t-1),h+n(k-1)},\quad h,k,s,t,=1,\dots, n$
			\State $D\gets \frac{r_1r_2}{n^2}\diag(\xi\otimes\xi)$
			\State $A\gets A \cdot D-\rho\cdot I_{n^2}$
			\State $\vc v_f\gets$ \Call{Eigs}{$A$},\qquad $V_f\gets$ \Call{Reshape}{$\vc v_f,n,n$}
			\State $\widehat G\gets$ \Call{IFFT2}{$V_f$},\quad $\widehat G\gets \left( \frac{\widehat g_{h,k}}{r_1^{h}r_2^{k}}\right)_{h,k=0,\dots,n-1}$\Comment{$\sum_{h,k=0}^{n-1}\widehat g_{h,k}x^{h}y^{k}$ interpolates $f(r_1x,r_2y)$}
			\State $\widehat G\gets -\frac{1}{\widehat g_{0,0}} \widehat G$,\quad $\widehat g_{0,0}\gets0$ 
			\State \Return $ \widehat G$
			\EndProcedure
		\end{algorithmic}
	\end{algorithm}
	\subsection{Rank structure in the matrix $A$}\label{sec:rank-2D}
	The size of the linear system \eqref{eq:G2D3} depends quadratically on the number of nodes $n$ that we use for discretizing each integral in \eqref{eq:G2D2}. In particular, the execution --- in dense arithmetic --- of Algorithm~\ref{alg:ev-int-2D} rapidly become computationally not feasible as $n$ increases. We here see that, similar to the single-type scenario, the matrix $A$ exhibits a rank structure and we discuss how to modify Algorithm~\ref{alg:ev-int-2D}  in order to consider larger values of $n$.
	
	Let us  denote with $\vc 1_n$ the vector of all ones of length $n$; then we can write
	\[
	C_{P_1,P_2,r_1,r_2}^{(n^2)}=C(\vc 1_n\otimes \vc \xi_{(1)}, P_1(\vc \xi_{(1)},\vc \xi_{(2)}))\circ C( \vc \xi_{(1)}\otimes \vc {1}_n, P_1(\vc \xi_{(1)},\vc \xi_{(2)})),
	\]
	where $\circ$ denotes the Hadamard product.
	In light of \eqref{eq:G2D3}, $A$ is obtained by applying a column scaling and a diagonal shift to the Hadamard product of two Cauchy matrices. Intuitively, if the latter are both  numerically low-rank, then we expect the numerical rank of $C_{P_1,P_2,r_1,r_2}^{(n^2)}$ to be much smaller than $n^2$. More formally, as pointed out in \cite[Section 4.2]{town}, Hadamard products of Cauchy matrices solve certain rank structured linear matrix equations and this enables to state decaying bounds for their singular values; see Theorem~2 in \cite{town}. 
	
	Since we have a cheap access to the entries of $C_{P_1,P_2,r_1,r_2}^{(n^2)}$, we compress it using Algorithm~\ref{alg:aca} in place of forming the full matrix $A$ in Line 4 of Algorithm~\ref{alg:ev-int-2D}. Once again, this provides two tall and skinny matrices $U,V$ whose storage consumption is $\mathcal O(n^2)$. Finally, we apply the diagonal scaling to the matrix $V$ and we replace the call to \textsc{Eigs}, in Line 7, with a call to Algorithm~\ref{alg:eigs-lr}. The modified procedure is reported in Algorithm~\ref{alg:lr-ev-int-2D} and tested in the next example.
	\begin{algorithm}
		\caption{Low-rank Evaluation-Interpolation in the 2D case}\label{alg:lr-ev-int-2D}
		\begin{algorithmic}[1]
			\Procedure{Compute\_G\_2D\_LR}{$P_1(x,y),r_1, P_2(x,y),r_2, n$}
			\State $\rho \gets$ spectral radius of $M$ given by \eqref{MM}
			\State $\vc{\xi}\gets\left(e^{\frac{2\pi \vc i j}{n}}\right)_{j=1,\dots, n}$
			\State $[U,V]\gets \Call{ACA}{C_{P_1,P_2,r_1,r_2}^{(n^2)}}$
			\State $D\gets \frac{r_1r_2}{n^2}\diag(\xi\otimes\xi)$
			\State $V\gets V \cdot D$
			\State $\vc v_f\gets$ \Call{Eigs\_LR}{$U,V,\rho$},\qquad $V_f\gets$ \Call{Reshape}{$\vc v_f,n,n$}
			\State $\widehat G\gets$ \Call{IFFT2}{$V_f$},\quad $\widehat G\gets \left( \frac{\widehat g_{h,k}}{r_1^{h}r_2^{k}}\right)_{h,k=0,\dots,n-1}$
			\State $\widehat G\gets -\frac{1}{\widehat g_{0,0}} \widehat G$,\quad $\widehat g_{0,0}\gets0$ 
			\State \Return $ \widehat G$
			\EndProcedure
		\end{algorithmic}
	\end{algorithm}
	
	\subsection{Numerical tests}
	\begin{example}\label{ex:2D-linfrac}
		We first consider a two-type linear fractional GW process whose offspring distribution is defined in Section~\ref{sec:2d-lin-frac}, with the following parameters:
		\[
		S=\begin{bmatrix}
		0.3&0.2\\
		0.2&0.3
		\end{bmatrix},\qquad \vc c=\begin{bmatrix}
		-0.05\\
		-0.05
		\end{bmatrix},\qquad
		\vc b=\begin{bmatrix}
		0.4\\
		0.4
		\end{bmatrix},
		\qquad
		d=1.
		\]
		Via direct computation we find that $\rho=\frac 23$, and the explicit expression of the coefficients $g_{h,k}$ is given in \eqref{eq:ghk} with $\nu_1=\nu_2= \frac 12$ and $\mu_1=\mu_2=\frac 18$. 
		
		We run Algorithm~\ref{alg:lr-ev-int-2D} for this example with  $n=256$, and we compute the relative error $|(\widehat g_{h,k}-g_{h,k})/g_{h,k}|$ of the approximate coefficients $\widehat g_{h,k}$ returned by our method. The results are shown in Figure~\ref{fig:2D-err} where we let the indices $h,k$ vary in $[0,20]$  (outside this range, the coefficients $g_{h,k}$ are below the machine precision). We see that the most accurate quantities are those with highest magnitude, i.e., the coefficients $\widehat g_{h,k}$ whose index $(h,k)$ is close to $(0,0)$. The relative error increases progressively as $h,k$ increase, reaching about $10^{-4}$ for the quantities that are at the level of the machine precision.
	\end{example}
	\begin{figure}\label{fig:2D-err}
		\centering
		\begin{tikzpicture}
		\begin{axis}[width=.55\linewidth, height=.27\textheight, zmode = log, view={60}{30},colorbar,
		colorbar style={ytick={-13.8155,  -18.4207,  -23.0259,  -27.6310},yticklabels={$10^{-6}$,$10^{-8}$,$10^{-10}$, $10^{-12}$}}]
		\addplot3[surf, z buffer=sort,scatter,only marks, opacity=0.6, color = blue] file {e_2D_linfrac.dat};  
		\end{axis}
		\end{tikzpicture}\caption{Example~\ref{ex:2D-linfrac}. Relative error of the computed coefficients $\widehat g_{h,k}$ in the bivariate linear fractional example.}
	\end{figure}
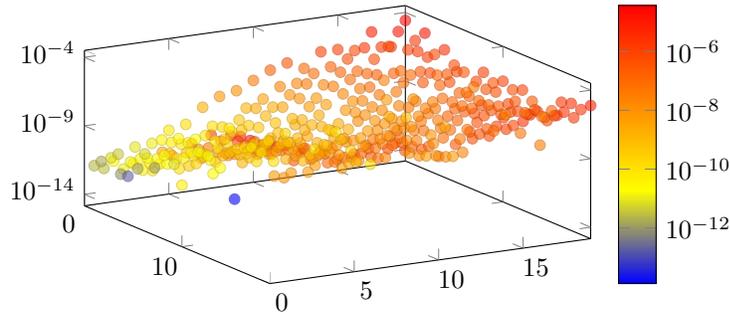
	\begin{example}\label{ex:3}
		Here we test the scalability of Algorithm~\ref{alg:lr-ev-int-2D} on a randomly generated example.
		We consider $P_1(x,y)$ and $P_2(x,y)$ equal to bivariate polynomials of degree $(2,2)$ with  coefficients reported in Table~\ref{tab:coef}. This yields $\rho\approx 0.5884$, $r_1 = 1.2462$ and $r_2=1.4104$. The radii $r_j$ are estimated using the MATLAB function \texttt{fminsearch}.

		In Figure~\ref{fig:e-2D} left, we show the performances of Algorithm~\ref{alg:lr-ev-int-2D} and the features of the computed solution. For all values of $n$ the computed coefficients $\widehat g_{h,k}$ are non negative up to machine precision. 
		The reported residual is defined as \[\text{Res}:=\max_{i,j=1,\dots,n}|\widehat G(P_1(\xi_i,\xi_j), P_2(\xi_i,\xi_j))-\rho\widehat G(\xi_i,\xi_j)-1+\rho|,\qquad \xi_j = \exp(2\pi\vc i j/n).\]
		In the last column we also report the rank of the approximation of the matrix $C_{P_1,P_2,r_1,r_2}^{(n^2)}$ returned by Algorithm~\ref{alg:aca}.  The growth of this quantity --- as the number of nodes  increases --- makes the time consumption  slightly super-quadratic with respect to $n$. In Figure~\ref{fig:e-2D}-right, we plot the  coefficients $\widehat g_{h,k}$ up to degree $(63,63)$, computed in the case $n=512$. Experimentally, we observe that if $h$ or $k$ is not in the range $[0,63]$ then $\widehat g_{h,k}\leq 10^{-31}$. This confirms that a small number of coefficients is sufficient to describe the quasi-stationary distribution with high accuracy.
	\end{example}
	\begin{table}\label{tab:coef}
		\centering
		\begin{tabular}{l|lllllllll}
			&$p_{0,0}^{(j)}$  &$p_{0,1}^{(j)}$ &$p_{0,2}^{(j)}$ &$p_{1,0}^{(j)}$ &$p_{1,1}^{(j)}$ &$p_{1,2}^{(j)}$ &$p_{2,0}^{(j)}$ &$p_{2,1}^{(j)}$ &$p_{2,2}^{(j)}$   \\ \hline
			$P_1(x,y)$&$0.798$&$ 0.029$&$0.009$&$0.015$&$0.010$&$0.022$&$ 0.052$&$0.020$&$0.045$  \\ 
			$P_2(x,y)$&$0.694$&$ 0.041$&$0.057$&$0.035$&$0.027$&$0.043$&$0.024$&$0.051$&$0.028$  \\
			\hline
		\end{tabular}
		\caption{Example~\ref{ex:3}. Coefficients of the bivariate polynomials $P_j(x,y)$.}
	\end{table}
	
	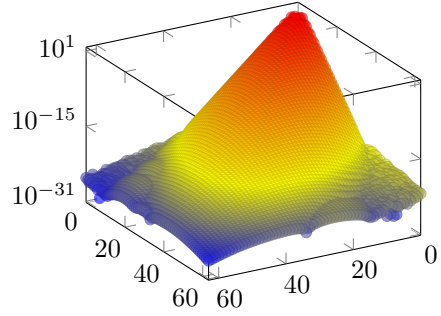
\begin{figure}
		\centering
		\begin{minipage}{0.55\textwidth}
			\pgfplotstabletypeset[%
			sci zerofill,
			columns={0,1,2,4, 5},
			columns/0/.style={column name=$n$},
			columns/1/.style={column name=Time (s), fixed},
			columns/2/.style={column name=Res},
			columns/4/.style={column name=$\sum \widehat g_{h,k}$},
			columns/5/.style={column name=rank}
			]{e_2D.dat}
		\end{minipage}~\begin{minipage}{0.42\textwidth}
			\bigskip 
			
			\begin{tikzpicture}
			\begin{axis}[width=.95\linewidth, height=.27\textheight, zmode = log, view={-60}{-30}]
			\addplot3[surf, z buffer=sort,scatter,only marks, opacity=0.6, color = blue] file {e_2D_G.dat};   
			\end{axis}
			\end{tikzpicture}
		\end{minipage}
		\caption{Example~\ref{ex:3}. On the left, performances of Algorithm~\ref{alg:ev-int-2D} as  $n$ increases. On the right, 2D plot of the coefficients $\widehat g_{h,k}$, in the case $n=512$.}
		\label{fig:e-2D}
	\end{figure}

	\section{Conclusions}
	We provided a fully algebraic analysis of the interplay between the regularity of the offspring distribution and that of the quasi-stationary distribution of a subcritical GW process.
	We proposed a new numerical method for computing the quasi-stationary distribution. We showed that our approach can significantly outperform the accuracy of other techniques based on simulations or on interpolation. 
	
	Moreover, we provided a theoretical analysis of the low-rank structure stemming from the discretization of the problem. This enabled our algorithm to be slightly modified in order to scale well with the fineness of the discretization. The reported  numerical tests confirm the scalability of computational time.
\section{Appendix}
Here we  report the proofs of some of the results in Section~\ref{sec:properties}.
\begin{proof}[Proof of Proposition~\ref{prop:1}]
	First, note that $G(z)$ analytic on $\mathcal B(0,r_G)$ with $r_G>1$ implies $G(z)$ analytic at $1$.
	
	Now, let us assume $G(z)$ analytic on an open neighborhood $A_1$ of $1$. We proceed by proving that $G(z)$ is analytic at every point of $\mathcal S^1$. Given $z\in\mathcal S^1$ we distinguish between two cases: $|P(z)|<1$ and $|P(z)|=1$. If $|P(z)|<1$, then there exists an open neighborhood $A_z$ of $z$  such that $|P(\widetilde z)|<1$ $\forall \widetilde z\in A_z$. Since $G(z)$ verifies \eqref{eq:G}, the expression 
	\begin{equation}\label{eq:continuation}
	G(z)=m^{-1}(G(P(z)) +m -1)
	\end{equation}
	provides an analytic continuation of $G(z)$ on  $A_z$. 
	If $|P(z)|=1$, then  $1=|p_0+w|$ where $w:=\sum_{j=1}^\infty p_jz^j$.
	Since $p_0\in(0,1)$ and $|w|\leq \sum_{j=1}^\infty p_j=1-p_0$, the sum $p_0+w$ has modulus $1$ if and only if $w=1-p_0$, i.e., $P(z)=1\in A_1$. In particular, there exists an open neighborhood $A_z$ of $z$ such that  $P(\widetilde z)\in A_1$ $\forall \widetilde z\in A_z$. Once again, \eqref{eq:continuation} defines an analytic continuation of $G(z)$ on $A_z$.
	
	By construction, $G(z)$ is analytic on $\mathcal B(0,1)$ and the union  $\mathcal B(0,1)\bigcup \{A_z\}_{z\in \mathcal S^1}$ yields an open set $\widehat A$ that contains $\mathcal D(0,1)$ where $G(z)$ is analytic.
	This implies that there exists $r_G>1$ such that $B(0,r_G)\subseteq \widehat A$ and $G(z)$ is analytic on $\mathcal B(0, r_G)$.
\end{proof}
\subsection{Differentiability of $G(z)$}
First we establish the relationship between the higher order derivatives of $G(z)$ and those of $P(z)$.
Differentiating \eqref{eq:G} $h$ times ($h\geq 1$) leads to \begin{equation}\label{hder}m\,G^{(h)}(z)= (G\circ P)^{(h)}(z),\end{equation} where $(G\circ P)(z):=G(P(z))$. The derivative of the composition is expressed in closed form with the \emph{Fa\`a di Bruno's formula} \cite{comtet},
\begin{equation}\label{eq:dibruno}
(G \circ P)^{(h)}(z)= \sum_{j=1}^hG^{(j)}(P(z))\cdot B_{h,j}\left(P^{(1)}(z),\dots,P^{(h-j+1)}(z)\right),
\end{equation}
which involves the  so-called \emph{Bell polynomials} $B_{h,j}$ \cite{miho}, defined as 
\[
B_{h,j}(x_1,\dots,x_{h-j+1}):=\sum \frac{h!}{j_1!\dots j_{h-j+1}!}\prod_{s=1}^{h-j+1}\left(\frac{x_s}{s!}\right)^{j_s},
\]
where the sum is taken over all sequences $j_1,j_2,\dots,j_{h-j+1}$ of non-negative integers such that
$\sum_{s=1}^{h-j+1}j_s=j$ and $\sum_{s=1}^{h-j+1}s\cdot j_s=h$. In particular we have $B_{h,h}(x_1)=x_1^h$. Plugging \eqref{eq:dibruno} into \eqref{hder} and evaluating at $z=1$ yields the relation
\begin{equation}\label{eq:derivative}
(m-m^h)G^{(h)}(1)=\sum_{j=1}^{h-1}G^{(j)}(1)\cdot B_{h,j}\left(P^{(1)}(1),\dots,P^{(h-j+1)}(1)\right),
\end{equation} which is only informative for $h>1$.
Equation \eqref{eq:derivative} highlights the connection between the existence of higher order derivatives of $P(z)$ and those of $G(z)$. In probabilistic terms, it relates the factorial moments of the quasi-stationary distribution to those of the offspring distribution. We are now ready to prove the following lemma.

\begin{lemma}\label{lem:deriv}
	For any $1<h\in\mathbb N$, $P^{(h)}(1)$ is finite if and only if $G^{(h)}(1)$ is finite.
\end{lemma}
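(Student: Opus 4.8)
The plan is to prove both implications simultaneously by strong induction on $h\geq 2$, with the recursion \eqref{eq:derivative} as the only real tool, supplemented by three elementary facts. First, if an $h$-th factorial moment of a probability sequence on $\mathbb{N}_0$ is finite then so is every lower one: since $\tfrac{j!}{(j-s)!}\leq\tfrac{j!}{(j-h)!}$ whenever $j\geq h\geq s$ and only finitely many terms have $s\leq j<h$, one gets $G^{(s)}(1)\leq(\text{finite})+G^{(h)}(1)$ and likewise $P^{(s)}(1)\leq(\text{finite})+P^{(h)}(1)$ for $1\leq s\leq h$. Second, $G^{(1)}(1)=\sum_{j\geq1}jg_j\geq\sum_{j\geq1}g_j=1$, so $G^{(1)}(1)$ is a strictly positive (a priori possibly infinite) quantity by which we may divide once it is known to be finite. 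Third, since $m=P^{(1)}(1)\in(0,1)$, the scalar $m-m^h$ in \eqref{eq:derivative} is nonzero for every $h\geq2$.

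For the forward implication I would fix $h\geq2$ with $P^{(h)}(1)<\infty$ and prove by an inner induction on $s$ that $G^{(s)}(1)<\infty$ for all $1\leq s\leq h$. The case $s=1$ is where Theorem~\ref{thm:heath} is invoked: $P^{(h)}(1)<\infty$ forces $P^{(2)}(1)=\sum_{j\geq2}j(j-1)p_j<\infty$, hence $\sum_{j\geq2}j\log j\cdot p_j<\infty$ because $j\log j\leq j(j-1)$ for $j\geq2$, hence $G^{(1)}(1)<\infty$. For the inductive step, apply \eqref{eq:derivative} with $h$ replaced by $s+1$: the right-hand side is a finite sum of products $G^{(j)}(1)\cdot B_{s+1,j}\bigl(P^{(1)}(1),\dots,P^{(s+2-j)}(1)\bigr)$ with $j\leq s$, so each $G^{(j)}(1)$ is finite by the inner hypothesis, and each Bell-polynomial argument $P^{(t)}(1)$ has $t\leq s+1\leq h$ and is therefore finite; dividing by $m-m^{s+1}\neq0$ yields $G^{(s+1)}(1)<\infty$. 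Taking $s=h$ gives $G^{(h)}(1)<\infty$.

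For the converse I would induct on $h\geq2$. Assume $G^{(h)}(1)<\infty$; then $G^{(s)}(1)<\infty$ for every $s\leq h$ by the monotonicity observation, and the induction hypothesis (the lemma for the orders $2,\dots,h-1$) gives $P^{(h')}(1)<\infty$ for $2\leq h'\leq h-1$, which together with $P^{(1)}(1)=m$ means $P^{(t)}(1)<\infty$ for all $t\leq h-1$. Now read off \eqref{eq:derivative}, isolating the term $j=1$ and using $B_{h,1}(x_1,\dots,x_h)=x_h$, so that this term equals $G^{(1)}(1)\,P^{(h)}(1)$; for $j\geq2$ the polynomial $B_{h,j}\bigl(P^{(1)}(1),\dots,P^{(h-j+1)}(1)\bigr)$ involves only derivatives of $P$ of order $\leq h-1$ and is finite, and $G^{(j)}(1)$ is finite for $j\leq h-1$. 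Hence $G^{(1)}(1)\,P^{(h)}(1)=(m-m^h)G^{(h)}(1)-\sum_{j=2}^{h-1}(\text{finite})$ is finite, and since $G^{(1)}(1)\geq1>0$ we get $P^{(h)}(1)<\infty$. The base case $h=2$ is the instance in which the sum over $j\geq2$ is empty, so \eqref{eq:derivative} reads $(m-m^2)G^{(2)}(1)=G^{(1)}(1)P^{(2)}(1)$ and the conclusion is immediate.

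I expect the main obstacle to be the bookkeeping of which derivatives of $P$ and of $G$ are known to be finite at each stage — this is precisely why the two directions cannot be folded into one naive induction. In the forward direction one must first extract $G^{(1)}(1)<\infty$ from $P^{(h)}(1)<\infty$ through Theorem~\ref{thm:heath} before the algebra of \eqref{eq:derivative} can be run; in the converse direction one must first propagate finiteness down to all lower-order derivatives of $P$ via the induction hypothesis before the $j=1$ term of \eqref{eq:derivative} can be isolated. Once this dependency ordering is respected, every individual step is a one-line consequence of \eqref{eq:derivative} together with $m-m^h\neq0$ and $G^{(1)}(1)\geq1$.
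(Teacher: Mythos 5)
Your proof is correct and follows essentially the same route as the paper: the forward direction extracts $G^{(1)}(1)<\infty$ from Theorem~\ref{thm:heath} and then runs an induction through \eqref{eq:derivative}, while the converse isolates the $j=1$ term $G^{(1)}(1)P^{(h)}(1)$ (using $G^{(1)}(1)\geq 1>0$ and $m-m^h\neq 0$) and again inducts. The only difference is that you spell out details the paper leaves implicit, such as the monotonicity of factorial moments and the base case $h=2$.
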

\begin{proof}
	First, assume that $P^{(h)}(1)<\infty$; observe that this implies $\sum_{j=2}^\infty j\log(j)p_j<\infty$, or equivalently, $G^{(1)}(1)<\infty$, in light of Theorem~\ref{thm:heath}.
	Moreover, \eqref{eq:derivative} expresses $G^{(h)}(1)$  as a linear combination of $G^{(1)}(1),\dots,G^{(h-1)}(1)$, whose coefficients are polynomial functions of  $P^{(1)}(1),\dots,$ $P^{(h)}(1)$. The claim then follows using an inductive argument.
	
	Next, assume $G^{(h)}(1)<\infty$. The only term which involves the $h$-th derivative of $P(z)$ in the right-hand side of \eqref{eq:derivative} is $G^{(1)}(1)P^{(h)}(1)$, which is obtained by choosing $j=1$, $j_1=\dots,j_{h-1}=0$ and $j_h=1$ in the series expansion. Since $G^{(1)}(1)\neq 0$, this allows us to express  $P^{(h)}(1)$ as a well-defined function of $G^{(1)}(1),\dots,G^{(h)}(1)$ and  $P^{(1)}(1),\dots,P^{(h-1)}(1)$. The claim then again follows by induction.
\end{proof}

\subsection{Proof of Theorem~\ref{thm:radius}}
We recall an identity regarding Bell polynomials.
\begin{lemma}[Wang and Wang \cite{wang}, Lemma 2.6]\label{lem:bell-id}
	Let $f(z):=\sum_{j=1}^{\infty}\frac{f_j}{j!} z^j$, then $\forall h,k\in\mathbb N$,
	\[
	B_{h,k}(f_1,\dots,f_{h-k+1})=\frac{h!}{k!}\cdot [z^h](f(z)^k),
	\] 
	where $[z^h](\cdot)$ denotes the operator that extracts the $h$-th coefficient from the power series expansion of the argument around zero.
\end{lemma}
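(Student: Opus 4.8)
The plan is to recognize this identity as the standard exponential-generating-function characterization of the partial Bell polynomials, and to verify it directly from the combinatorial definition recalled above via the multinomial theorem.

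First I would expand $f(z)^k=\bigl(\sum_{s\geq 1}\tfrac{f_s}{s!}z^s\bigr)^k$ by the multinomial theorem, obtaining a sum over tuples $(j_1,j_2,\dots)$ of non-negative integers with $\sum_{s\geq 1}j_s=k$, in which the term indexed by such a tuple is $\frac{k!}{\prod_s j_s!}\,\prod_s\bigl(\tfrac{f_s}{s!}\bigr)^{j_s}z^{\sum_s s\,j_s}$. Extracting the coefficient of $z^h$ adds the constraint $\sum_{s\geq 1}s\,j_s=h$ to the summation.

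Next I would observe that the two constraints $\sum_s j_s=k$ and $\sum_s s\,j_s=h$ force $j_s=0$ for every $s>h-k+1$: if $j_s\geq 1$ for some index $s$, the remaining $k-1$ units counted by $\sum_{s'}j_{s'}=k$ each contribute at least $1$ to $\sum_{s'}s'\,j_{s'}=h$, whence $s\leq h-k+1$. Therefore only the arguments $f_1,\dots,f_{h-k+1}$ appear, and multiplying $[z^h]\bigl(f(z)^k\bigr)$ by $h!/k!$ cancels the factor $k!$ and yields exactly $\sum\frac{h!}{j_1!\cdots j_{h-k+1}!}\prod_{s=1}^{h-k+1}\bigl(\tfrac{f_s}{s!}\bigr)^{j_s}$, the sum being over tuples with $\sum_s j_s=k$ and $\sum_s s\,j_s=h$ --- which is precisely the definition of $B_{h,k}(f_1,\dots,f_{h-k+1})$ stated earlier. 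This finishes the proof.

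I do not anticipate a genuine obstacle; the only care needed is the bookkeeping of the summation ranges (to confirm that indices exceeding $h-k+1$ never contribute, so that the polynomial in $h-k+1$ variables is well posed) and the degenerate case $h<k$, where both sides vanish --- the left-hand side by the convention on $B_{h,k}$, the right-hand side because $f(z)^k=O(z^k)$.
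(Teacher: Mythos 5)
Your proof is correct. Note that the paper itself offers no proof of this statement: it is quoted verbatim from Wang and Wang (their Lemma 2.6) and used as a black box in the proof of Theorem~\ref{thm:radius}, so there is no internal argument to compare against. Your direct verification is the standard one --- it is exactly the exponential-generating-function characterization of the partial Bell polynomials, unwound via the multinomial expansion of $f(z)^k$ --- and the details are handled properly: the two constraints $\sum_s j_s=k$ and $\sum_s s\,j_s=h$ do force $j_s=0$ for $s>h-k+1$ by the counting argument you give, the factor $\frac{h!}{k!}$ cancels the multinomial $k!$ to reproduce the definition of $B_{h,k}$ stated in the paper, and the degenerate case $h<k$ is correctly dispatched on both sides. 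What your write-up buys is self-containedness: the paper's reliance on the external reference is replaced by a two-line computation from the definition it already displays, with no new machinery.
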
 

\begin{proof}[Proof of Theorem~\ref{thm:radius}]
	
	The function $P(z)$ being analytic at $z=1$ by assumption, we consider the power series expansion of $\widetilde P(z):=P(1+z)=\sum_{j\geq 0}\tilde p_jz^{j}$ that has radius of convergence $r_P-1$.
	
	By Proposition~\ref{prop:1}, the claim is equivalent to having $G(z)$ analytic at $1$. Therefore, we proceed by considering the (left looking) Taylor expansion of $G(z)$ at $1$ and by showing that its radius of convergence is non-zero. In view of \eqref{eq:exp-decay} this is equivalent to showing that $\exists \rho,\theta_G >0$  such that 
	\[
	G^{(h)}(1)\leq \theta_G \cdot \rho^{-h}\cdot h!,\qquad \forall h\in\mathbb N.
	\]
	Choosing $\theta_G=\max\{1,G^{(1)}(1)\cdot \rho\}$ provides the claim for $h=0$ and $h=1$ without limiting the parameter $\rho$. For $h>1$ we use an inductive argument; from \eqref{eq:derivative} we get
	\begin{align*}
	G^{(h)}(1)&=\frac{1}{m-m^h}\sum_{j=1}^{h-1}G^{(j)}(1)\cdot B_{h,j}\left(P^{(1)}(1),\dots,P^{(h-j+1)}(1)\right)\\
	&\leq \frac{\theta_G}{m-m^h}\sum_{j=1}^{h-1}\rho^{-j}\cdot  j!\cdot B_{h,j}\left(P^{(1)}(1),\dots,P^{(h-j+1)}(1)\right).
	\end{align*}
	Observe that $P^{(j)}(1)=j!\cdot \tilde p_j$, therefore, Lemma~\ref{lem:bell-id} implies
	\[
	B_{h,j}\left(P^{(1)}(1),\dots,P^{(h-j+1)}(1)\right) =\frac{h!}{j!}\cdot [z^h]\left((\widetilde P(z)-1)^j\right).
	\]
	Since $(\widetilde P(z)-1)^j$ also has radius of convergence $r_P-1$ and its expansion involves non-negative coefficients, the $h$-th coefficient of the latter satisfies
	\[
	[z^h]\left((\widetilde P(z)-1)^j\right)\leq (\widetilde P(r)-1)^jr^{-h},\qquad \forall r\in(0, r_P-1).
	\]
	In particular, selecting $\widetilde r\in(0,\psi_P-1)$, where $\psi_P$ is given in \eqref{psiP}, provides $P(1+\widetilde r)\leq  1+\widetilde r$ and $
	[z^h]\left((\widetilde P(z)-1)^j\right)\leq \widetilde r^{(j-h)}.$ 
	Coming back to $G^{(h)}(1)$, we then have
	\[
	G^{(h)}(1)\leq \frac{\theta_G\cdot h!}{\widetilde r^h(m-m^h)}\sum_{j=1}^{h-1}\left(\frac{\widetilde r}{\rho}\right)^{j}= \theta_G\cdot \rho^{-h}\cdot h! \cdot\frac{ \left(\frac{\widetilde r}{\rho}\right)^{1-h}-1}{\left(1-\frac{\widetilde r}{\rho}\right)(m-m^h)}. 
	\]
	Choosing $\rho$ small enough we can ensure $\frac{ \left(\frac{\widetilde r}{\rho}\right)^{1-h}-1}{\left(1-\frac{\widetilde r}{\rho}\right)(m-m^h)}<1$  independently of $h>1$. This completes the proof.
\end{proof}
	\section{Acknowledgements}
	Sophie Hautphenne thanks the Australian Research Council for support through Discovery Early Career Researcher Award DE150101044. The authors also thank Daniel Kressner, Beatrice Meini and Phil Pollett for fruitful discussions, as well as three anonymous referees for valuable comments and suggestions.
	\bibliography{biblio}
	\bibliographystyle{siamplain}
	
	\end{document}